\def\FV{\text{FV}}
\def\obL{\text{ob}\L}
\def\obLe{\emph{\text{ob}}\L}
\def\Lrg{\mathcal{L}_{\text{rg}}}
\def\dep{\text{dep}}
\def\T{\mathbb{T}}
\def\C{\mathcal{C}}
\def\D{\mathcal{D}}
\def\L{\mathcal{L}}
\def\M{\mathcal{M}}
\def\StrucL{\textbf{Str}_\L}
\def\SStrucL{\textbf{SatStr}_\L}
\def\Lcat{\mathcal{L}_{\text{cat}}}
\def\Tcat{\mathbb{T}_{\text{cat}}}
\def\eqdef{=_{\text{df}}}
\def\Ind{\text{Ind}}
\def\Inde{\emph{\text{Ind}}}
\def\FormL{\textbf{Form}_\L}
\def\idtoiso{\mathtt{idtoiso}}
\def\SatGK{\textbf{Sat}(\MK)}
\def\conteq{\sim_c}
\def\yoneda{\textbf{y}}
\def\Iso{\text{Iso}}
\def\Yso{\text{Yso}}
\def\MK{K^\M}
\def\heightof{\textrm{h}}
\def\HomL{\text{Hom}_{\L}}
\def\isrfibsurjK{\mathtt{sps}}
\def\isrfibsurj{\mathtt{fibsurj}}
\def\N{\mathcal{N}}
\def\FibSurj{\textbf{FibSurj}}
\def\UniCat{\textbf{UniCat}}
\def\refl{\text{refl}}
\def\tran{\text{tran}}
\def\sym{\text{sym}}
\def\eqr{\text{eqr}}
\def\alphaequiv{\equiv_{\alpha}}
\def\MR{R^\M}
\def\isiso{\mathtt{isiso}}
\def\isbijection{\mathtt{bij}}
\def\syneq{=}
\newcommand\frakfamily{\usefont{U}{yfrak}{m}{n}}
\DeclareTextFontCommand{\textfrak}{\frakfamily}
\newcommand{\Pitype}[1]{\underset{#1}{\Pi}}
\newcommand{\Sigmatype}[1]{\underset{#1}{\Sigma}}
\author{Dimitris Tsementzis}
\theoremstyle{plain}\newtheorem{lemma}{Lemma}[section]
\theoremstyle{plain}\newtheorem{cor}[lemma]{Corollary}
\theoremstyle{plain}\newtheorem{theo}[lemma]{Theorem}
\theoremstyle{plain}\newtheorem{pretheorem}[lemma]{Pre-Theorem}
\theoremstyle{plain}
\theoremstyle{definition}\newtheorem{defin}[lemma]{Definition}
\theoremstyle{definition} \newtheorem{exam}[lemma]{Example}
\theoremstyle{remark} \newtheorem{remark}[lemma]{Remark}
\theoremstyle{plain}\newtheorem{prop}[lemma]{Proposition}
\theoremstyle{definition}
\theoremstyle{definition}\newtheorem*{notation}{Notation}
\theoremstyle{definition}
\theoremstyle{definition}
\title{A Higher Structure Identity Principle}
\begin{document}

\date{\today}
\keywords{Univalent Foundations, Categorical Logic, Homotopy Type Theory}
\subjclass[2010]{03G99, 03B15, 03B22, 03C99}

\address{Department of Philosophy, Princeton University}
\curraddr{Princeton, NJ 08544, USA}
\email{dtsement@princeton.edu}

\address{Department of Statistics and Biostatistics, Rutgers University}
\curraddr{New Brunswick, NJ 08854, USA}
\email{dt506@rci.rutgers.edu}




\def\eqgap{.2ex}
\def\overgap{.4ex}
\def\inferrulerule{.2pt}

\newlength\rulelength
\newlength\toplength
\newlength\bottomlength

\newcommand\myinferrule[2]{%
  \stackMath%
  \setlength\bottomlength{\widthof{$#1$}}%
  \setlength\toplength{\widthof{$#2$}}%
  \ifdim\toplength>\bottomlength%
    \setlength\rulelength{\the\toplength}%
  \else%
    \setlength\rulelength{\the\bottomlength}%
  \fi%
  \mathrel{%
    \stackunder[\overgap]{%
      \stackon[\overgap]{%
        \stackanchor[\eqgap]%
          {\rule{\the\rulelength}{\inferrulerule}}%
        {\rule{\the\rulelength}{\inferrulerule}}%
      }{#2}%
    }{#1}%
  }%
}

\newcommand{\SMod}[2]{\textbf{SatMod}_{#1}^{#2}}
\newcommand{\SMode}[2]{\textbf{\emph{SatMod}}_{#1}^{#2}}
\newcommand{\SStruc}[2]{\textbf{SatStruc}_{#1}^{#2}}
\newcommand{\Struc}[1]{\textbf{Struc}_{#1}}
\newcommand{\Mod}[1]{\textbf{Mod}_{#1}}
\newcommand{\truncate}[2]{\vert\vert #1 \vert\vert_{\mathbf{#2}}}

\begin{abstract}
We prove a Structure Identity Principle for theories defined on types of $h$-level $3$
by defining a general notion of saturation for a large class of structures definable in the Univalent Foundations.
\end{abstract}

\maketitle

Formalizing mathematics in the framework of the Univalent Foundations \cite{HTT} presents unique challenges and opportunities.
One of the main opportunities is the ability to formalize higher-level mathematics (categories, higher categories etc.) in an invariant way by imposing appropriate saturation conditions. This is done, for example, in \cite{HTT} for category theory (cf. Definition 9.1.6).
An interesting challenge is how to express general saturation conditions that apply to wide classes of structures that can be formalized in UF.
The main contribution of this paper is to provide such a general definition of a saturation condition for a wide class of definable structures and to use it to prove a Structure Identity Principle
for all ``category-level'' (or ``$3$-level'') such structures.

To illustrate, consider the following theorem from \cite{HTT}:
\begin{theo}[\cite{HTT}, Theorem 9.4.16]
For any univalent categories $\C$ and $\D$, the type of categorical equivalences $\C \simeq_{\text{\emph{cat}}} \D$ is equivalent to $\C =_{\emph{\UniCat}} \D$.
\end{theo}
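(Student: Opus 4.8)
The plan is to establish, for univalent categories $\C$ and $\D$, the chain of equivalences
\[ (\C =_{\UniCat} \D) \;\simeq\; (\C =_{\mathbf{Precat}} \D) \;\simeq\; (\C \cong \D) \;\simeq\; (\C \simeq_{\text{cat}} \D), \]
where $\mathbf{Precat}$ is the type of precategories, $\C \simeq_{\text{cat}} \D$ is the type of equivalences of categories (fully faithful, essentially surjective functors), and $\C \cong \D$ is the type of \emph{strict isomorphisms}, i.e.\ functors $F\colon\C\to\D$ whose action on objects is an equivalence of types and which are fully faithful.

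For the first equivalence, note that $\UniCat$ is the total type $\Sigma_{(C\,:\,\mathbf{Precat})}\mathrm{isCat}(C)$, and that $\mathrm{isCat}(C)$ --- the assertion that $\idtoiso$ is an equivalence --- is a mere proposition; since a $\Sigma$-type with propositional fibers has the same identity types as its base, this gives $(\C =_{\UniCat}\D)\simeq(\C =_{\mathbf{Precat}}\D)$. For the second equivalence I would unfold the iterated $\Sigma$-type defining $\mathbf{Precat}$ and repeatedly apply univalence, function extensionality, and the characterization of identity types in $\Sigma$-types --- the ``structure identity principle'' pattern. The associativity and unit axioms are mere propositions and contribute nothing, so a path of precategories unwinds into an equivalence $\mathrm{ob}\,\C\simeq\mathrm{ob}\,\D$ together with a compatible fiberwise equivalence on hom-sets respecting identities and composition, which is exactly the data of a strict isomorphism.

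The content is in the last equivalence. Both $\C\cong\D$ and $\C\simeq_{\text{cat}}\D$ sit inside the type of functors $\C\to\D$ as subtypes cut out by mere propositions (``the object-map is an equivalence and $F$ is fully faithful'', respectively ``$F$ is fully faithful and essentially surjective''), so it suffices to show that for \emph{univalent} $\C,\D$ these two propositions are logically equivalent. One direction needs no univalence: a strict isomorphism is evidently essentially surjective. Conversely, suppose $F$ is fully faithful and essentially surjective. Full faithfulness makes $F$ induce an equivalence on iso-types, so the object-map $F_0$ is an embedding, via the composite $(a=a')\simeq(a\cong a')\xrightarrow{\ \sim\ }(Fa\cong Fa')\simeq(Fa=Fa')$, which is $\mathrm{ap}_{F_0}$ by naturality of $\idtoiso$ (here univalence of both $\C$ and $\D$ is used). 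Essential surjectivity of $F$, with univalence of $\D$, makes $F_0$ surjective, since $\|\Sigma_a(Fa\cong d)\|\simeq\|\Sigma_a(Fa=d)\|=\|\mathrm{fib}_{F_0}(d)\|$. An embedding that is surjective is an equivalence, so $F$ is a strict isomorphism, and the two subtypes of $\C\to\D$ coincide.

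I expect the main obstacle to be the bookkeeping in the second step --- verifying that a path of precategories really unwinds into precisely the strict-isomorphism data, with all compatibility cells landing where they should --- which is exactly the kind of computation a general structure identity principle is designed to discharge uniformly; and, in the third step, being scrupulous about where univalence of $\C$ is used versus univalence of $\D$ and that the displayed composite is genuinely $\mathrm{ap}_{F_0}$.
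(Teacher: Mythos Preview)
Your argument is correct and is essentially the proof given in the HoTT book itself (Lemma~9.4.15 and Theorem~9.4.16 there): characterize identity of precategories as strict isomorphism via univalence, then show that for univalent categories strict isomorphism coincides with categorical equivalence. The steps you outline are sound, including the embedding argument via $\idtoiso$-naturality and the observation that both conditions on $F$ are mere propositions over the type of functors.

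The paper, however, proves this statement by an entirely different route: it is derived as a \emph{corollary} of the general Higher Structure Identity Principle (Theorem~\ref{HSIP}). The paper first establishes that totally saturated models of $\Tcat$ are exactly the univalent categories (Proposition~\ref{unicatsat}), then applies the general result $(\M \simeq_{\L} \N)\simeq(\M =_{\SMod{\T}{}}\N)$ for saturated models of height-$3$ theories, and finally observes that the internalized FOLDS equivalence $\simeq_{\Lcat}$ specializes to categorical equivalence. Your proof is more elementary and self-contained, requiring none of the FOLDS machinery; the paper's proof is the payoff of that machinery --- the whole purpose is to exhibit Theorem~9.4.16 as one instance of a uniform principle rather than to reprove it directly. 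So your approach buys independence from the paper's framework, while the paper's approach buys generality: the same argument yields an SIP for \emph{any} height-$3$ theory, not just categories.
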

\noindent By regarding univalent categories as a ``saturated'' version of an unsaturated structure (i.e. of precategories) we can regard the above result as a specific instance of a more general result of the following form:
\begin{pretheorem}\label{satpretheo}
For any saturated models $\M$ and $\N$ of an $\L$-theory $\T$, the type of $\L$-equivalences $\M \simeq_\L \N$ is equivalent to $\M =_{\SMode{\T}{}} \N$. 
\end{pretheorem}
The purpose of this paper is to make precise and prove a result of this form, such that Theorem 9.4.16 follows as a special case (as an assurance of adequacy).

This is done as follows.
 An ``$\L$-theory $\T$'' will be given by a theory over a FOLDS signature $\L$ in the sense of Makkai \cite{MFOLDS}, i.e. a finite inverse category $\L$. 
A ``model $\M$'' of $\T$ (or just simply an ``$\L$-structure'') will be given by an interpretation of FOLDS into HoTT as has been described in \cite{TsemHMT}.
An ``$\L$-equivalence'' between two such models $\M$ and $\N$ will be given by Makkai's notion of FOLDS $\L$-equivalence, which in set theory is defined as the existence of a span of fiberwise surjective $\L$-homomorphisms from $\M$ to $\N$.
What remains is to define what a ``saturated'' model is in this setting and indeed this is the main contribution of this paper.

The general definition of saturation is based on the following intuitive idea: a structure is saturated if any two indistinguishable components in it are identical.
To make this idea precise we will define for any FOLDS signature $\L$ a general notion of isomorphism for any sort $K$ in $\L$ and any $x,y \colon K$ in the form of an $\L$-formula $x \cong y$. 
The intuition behind $x \cong y$ is that ``$x$ and $y$ are indistinguishable by any sort which depends on $K$, in any position''. Spelling out exactly how ``indistinguishable'' and ``position'' are to be formalized constitutes the main task of the syntactic portion of the paper.
Roughly, this is done by a mutually inductive definition of a formula $\Ind(x,y)$ expressing indistinguishability (up to equivalence) with respect to sorts ``upstairs'' (Definition \ref{ind}) and then a notion of equivalence which says that the indistinguishability relation $\Ind (x,y)$ is a bijection (up to indistinguishability) (Definition \ref{equivalence}). 

We can then interpret the formula $x \cong y$ for any $\L$-structure $\M$ and given this we can define what it means for such an $\M$ to be saturated at any sort $K$ and also associated notions of total saturation, saturation at a given level etc. This forms the key definition of this paper (Definition \ref{saturation}) which provides a very wide notion of saturation of which the known examples of saturated structures become special instances. 
For example, we will show that the totally saturated models of the theory of categories are the univalent categories (Proposition \ref{unicatsat}) whereas precategories are the ``$2$-saturated'' such models (Proposition \ref{precatsat2}).

Finally, given this general notion of saturation, we will prove a precise version of Pre-Theorem \ref{satpretheo} for FOLDS signatures $\L$ of ``height'' 3. 
Signatures of this height can be thought of as expressing theories whose saturated models are given by structures on $1$-types, i.e. on types of $h$-level 3. 
The theory of precategories is an example of such a theory.
This is done in Theorem \ref{HSIP}, the main result of this paper, which can be thought of as a Structure Identity Principle for structures of $h$-level $3$.


\subsection*{Outline of the Paper} In Section \ref{prelim} we fix notation and introduce the relevant concepts from FOLDS.
In Section \ref{syndef} we give the definition of $x \cong y$ for any FOLDS signature $\L$ and any $x ,y \colon K$ (Definitions \ref{ind} and \ref{equivalence}).
In Section \ref{secsat} we define saturation for FOLDS $\L$-structures in type theory (Definition \ref{saturation})
 and prove certain useful properties of our definitions. 
Finally, in Section \ref{HSIPsec} we internalize the notion of FOLDS equivalence in type theory and use this definition to prove a Structure Identity Principle for $3$-level structures (Theorem \ref{HSIP}). We also provide proofs that our higher Structure Identity Principle gives the right answer in the case of the theory of categories (Propositions \ref{precatsat2} and \ref{unicatsat}).

\section{Preliminaries and Notation}\label{prelim}


\subsection{FOLDS}

By FOLDS we understand the system of Makkai introduced in \cite{MFOLDS}. We here introduce the relevant concepts.
A \emph{FOLDS signature} $\L$ is a finite inverse category. 
We assume each signature $\L$ comes with a \emph{level} function $l \colon \obL \rightarrow \mathbb{N}$ defined as follows
\[
l(K) = \left\{
	\begin{array}{ll}
		1  & \mbox{if $K$ is the codomain only of $1_K$} \\
		\underset{f \colon K_f \rightarrow K}{\text{sup}}l(K_f)+1 & \mbox{otherwise} 
	\end{array}
\right.
\]
Note that ``top-level'' sorts are assigned level $1$, contrary to Makkai's original definition. This is done to foreshadow that these sorts, when saturated, will be interpreted as types of $h$-level 1. However, we will write $K > K'$ to denote that $K$ has a \emph{lower} level than $K'$ (i.e. that $l(K)<l(K')$ in order to think of $K>K'$ as saying that $K$ is ``higher'' (and therefore depends on) $K'$. 
We also write $\heightof(\L)$ for the \emph{height} of $\L$, i.e. the maximum level of any sort $K$ in $\L$ (since we are assuming that $\L$ is finite, the height will also always be finite).
For any $K \in \obL$ we will write $\L \downarrow K$ for the comma category under $K$ in $\L$.
We will generally use the notation $\L(K,K')$ for the hom-sets of $\L$.

Variables for $\L$ are given by a functor $V \colon \L \rightarrow \textbf{Set}$ satisfying certain obvious conditions, e.g. that for any $K \neq K'$, $V(K)$ and $V(K')$ are countably infinite disjoint sets etc. 
We will write $x \colon K$ for $x \in V(K)$, for any $K \in \obL$.
For any $x \colon K$ and $f \in \L(K, K_f)$ we write $x_f$ for $V(f)(x)$. It is helpful to think of $f$ as a \emph{position} and $x_f$ as the variable that $x$ depends on in position $f$.
We write $\dep (x)$ for the set $\lbrace x_f \: \vert \: \text{dom}(f)=K \rbrace$, i.e. the set of \emph{dependent variables} of $x$ and $\partial x$ for the the set of dependent variables of $x$ but omitting $x$, i.e the \emph{boundary} of $x$. 

A \emph{context} is a finite subfunctor of $V$.
A \emph{context morphism} is a natural transformation $s \colon \Gamma \rightarrow \Delta$.
For any two contexts $\Gamma$ and $\Delta$ we write $\Gamma \cup \Delta$ for their union as subobjects of $V$, i.e. the functor that takes $K \mapsto \Gamma(K) \cup \Delta(K)$.
It is not hard to see that both $\dep(x)$ and $\partial x$ define contexts.
We will use the symbol $\syneq$ for equality of contexts and variables.

We define the syntax associated to $\L$ by an inductive definition in the usual way (except the clauses for quantifiers are a little more involved that first-order logic) and we write $\FormL (\Gamma)$ for the set of $\L$-formulas in context $\Gamma$.
We write FV($\phi$) for the free variables of an $\L$-formula $\phi$.
Substitution along a context morphism is defined in the expected way and we write $s(\phi)$ for the result of substituting the variables in a formula $\phi$ in context $\Gamma$ along $s \colon \Gamma \rightarrow \Delta$.
As usual, we consider formulas up to \emph{$\alpha$-equivalence}, denoted by $\alphaequiv$, i.e. up to renaming of their bound variables.
A \emph{theory} $\T$ in $\L$ is a set of $\L$-sentences.
It is also useful (for the sake of Definition \ref{ind}) to define the coarser notion of
\emph{contextual equivalence} of formulas $\phi$ (in context $\Gamma$) and $\psi$ (in context $\Delta$), denoted $\conteq$, as follows: $\phi \conteq \psi$ iff there exists an isomorphism $s \colon \Gamma \rightarrow \Delta$ such that $s(\phi) \alphaequiv \psi$. In other words, two formulas are contextually equivalent if we can rename the free variables of one in such a way as to make it $\alpha$-equivalent to the other. 
This notion is helpful in distinguishing repeat uses of the same variable in formulas, e.g. $\phi(x,x)$ is not contextually equivalent to  $\phi(x,y)$ but $\phi(x,y)$ is contextually equivalent to $\phi(z,w)$.

For $R \in \obL$ with $l(R) = 1$ and $\alpha \colon R$ we write $R(\alpha)$ as syntactic sugar for the formula $\exists \alpha \colon R. \top$. This means that $\text{FV}(R(\alpha)) = \partial \alpha$ which means that if $\partial \alpha = \partial \beta$ then $R(\alpha) \alphaequiv R(\beta)$.

For a set of variables $\Gamma$ (not necessarily a context) we write $\forall \Gamma \phi$ for the universal closure of $\phi$ under all variables in $\Gamma$. 
When this is done we will assume that $\forall \Gamma \phi$ is well-formed, i.e. that $\FV(\forall \Gamma \phi)$ is a well-formed context.
For a given set of formulas $S$, by $\bigwedge S$ we will mean the conjunction of all formulas in $S$ (and similarly for $S_1 \wedge S_2$).
We will also reserve the symbol $\Leftrightarrow$ for logical equivalence over a standard deductive system for FOLDS (cf. \cite{MFOLDS} for details). Alternatively, for the purposes of this paper one may take the logical consequence relation $\Rightarrow$ to mean simply derivability in the type-theoretic semantics of FOLDS formulas (cf. \cite{TsemHMT} for a detailed explanation of these semantics). For example without any loss of content one may simply take something like $\phi \Rightarrow \psi$ to mean that given a term of the interpretation of the formula $\phi$ as a type one can produce a term of the type $\psi$ in type theory.  
We will write $\T \models \phi \Leftrightarrow \psi$ to denote $\T$-provable logical equivalence of $\phi$ and $\psi$.

Several examples of FOLDS signatures will come in handy for the purpose of illustration, so we define them here, with the level of each sort displayed on the left.
We will denote by $\Lrg$ the FOLDS signature

\hspace{5cm} \xymatrix{
1 &I \ar[d]_{i} \\
2 &A \ar@/^/[d]^{d} \ar@/_/[d]_{c} \\
3 &O
}

\noindent subject to the relation $di=ci$. $\Lrg$ can be thought of as a signature useful for formalizing reflexive graphs.

By $\Lrg^{=_A}$ we will denote the FOLDS signature extending $\Lrg$ where we add an ``equality'' on $A$

\hspace{5cm} \xymatrix{
1 &I \ar[d]_{i} & =_A \ar@/^/[ld]^{s} \ar@/_/[ld]_{t} \\
2 &A \ar@/^/[d]^{d} \ar@/_/[d]_{c} \\
3 &O
}

\noindent subject to the relations $di=ci$, $ds=dt$, $cs=ct$.

By $\Lcat$ we will denote the following FOLDS signature

\hspace{5cm} \xymatrix{
1& \circ \ar@/^5pt/[rd]^{t_0} \ar[rd]_{t_1} \ar@/_20pt/[rd]_{t_2} & I \ar[d]^{i}  &=_A \ar@/_/[ld]_{t} \ar@/^/[ld]^{s} \\
2 & & A \ar@/_/[d]_{c} \ar@/^/[d]^{d} \\
3 &  & O \\
}

\noindent subject to the same relations as $\Lrg^{=_A}$ in addition to the following:
\[
dt_0=dt_2, ct_1=ct_2, dt_1=ct_0
\]
\[
ds=dt, cs=ct
\]
By $\Tcat$ we will denote the $\Lcat$-theory of categories axiomatized in the usual way, i.e. by adding axioms expressing that $\circ$ is an associative operation for which $I$ picks out a right and left unit etc. (cf. \cite{TsemHMT} for details). 

\subsection{Homotopy Type Theory}

On the semantic side, we will assume that we will work with some Homotopy Type Theory to which we will refer generically as \emph{type theory}.
The system in \cite{HTT} will do just fine, for example.
In particular, we will assume that all the types in terms of which $\L$-structures are defined live in some univalent universe $\mathcal{U}$.
The terms \emph{proposition}, \emph{set}, $n$\emph{-type}, $h$\emph{-level}, \emph{precategory}, \emph{univalent category} etc. refer to the standard definitions as given e.g. in \cite{HTT}.
We will denote by \textbf{PreCat} the type of precategories and by \textbf{UniCat} the type of univalent categories.
For a given type $A$ and $a,b \colon A$ we will write $a=_Ab$ for the identity type of $a$ and $b$, often abbreviating to $a=b$ when $A$ is clear from the context.
For a given $f \colon A \rightarrow B$ we will also use the abbreviation $\isbijection(f)$ for the type
\[
\bigg( \Pitype{x,x' \colon A} f(x)= f(x') \rightarrow x=x'\bigg) \times \bigg( \Pitype{y \colon B} \Sigmatype{x \colon A} f(x)=y \bigg)
\]
We use the notation $\truncate{A}{n}$ for the $n$-truncation of a type $A$, where $n$ will denote the $h$-level, e.g. for $n=1$, $\truncate{A}{1}$ is the propositional truncation.

Any FOLDS signature $\L$ can be translated as data in type theory in a straightforward way, giving us a notion of a type $\StrucL$ of $\L$-structures a term $\M$ of which can be thought of as an $\L$-structure (cf. \cite{TsemHMT} for details).
This is done by extracting a $\Sigma$-type from $\L$ by induction on the level of $\L$.
For example, we have
\[
\Struc{\Lrg} \eqdef \Sigmatype{O \colon \mathcal{U}} \: \Sigmatype{A \colon O \rightarrow O \rightarrow \mathcal{U}} \: \Pitype{x \colon O} A(x,x) \rightarrow \mathcal{U}
\]
Whenever we pick a specific $\L$-structure $\M \colon \Struc{\L}$ we will denote by $K^\M$ the interpretation of any sort $K \in \obL$. For example, the data of $\M \colon \Struc{\Lrg}$ can be written as 
$
\langle O^\M, A^\M, I^\M \rangle
$.
To further illustrate the notation, we can for instance define
\[
\Struc{\Lrg^{=_A}} \eqdef \Sigmatype{\M \colon \Struc{\Lrg}} \:\: \bigg( \Pitype{x,y \colon O^\M} A^\M (x,y) \times A^\M (x,y) \rightarrow \mathcal{U} \bigg)
\]
Similarly, all $\L$-formulas can be translated under the usual propositions-as-types translation of the quantifiers and logical connectives (with $\exists$ interpreted as truncated $\Sigma$ and $\vee$ interpreted as truncated $+$ ensuring that the interpretations are all $h$-props).
For example, for the $\Lrg$-sentence $\exists x \colon O. \forall f \colon A(x,x). I(f)$ will be interpreted in an $\L$-structure $\M$ as the proposition
\[
\truncate{\Sigmatype{x \colon O^\M} \Pitype{f \colon A^\M(x,x)} I^\M (f)}{1}
\]
When necessary we will write $\phi^\M$ to refer to the interpretation of a certain $\L$-formula into an $\L$-structure $\M$. 
We can then obtain a notion of \emph{satisfaction} of a formula $\phi$ in an $\L$-structure $\M$ and therefore a notion of a \emph{model} $\M$ of an $\L$-theory $\T$ (where a \emph{theory} $\T$ is a set of $\L$-sentences).
We write $\Mod{\T}$ for the type of $\T$-models, i.e. those $\L$-structures that satisfy all $\phi \in \T$.
For example, for the $\Lrg$-theory $\T_1 = \lbrace \exists x \colon O. \forall f \colon A(x,x). I(f) \rbrace$ we have
\[
\Mod{\T_1} \eqdef \Sigmatype{\M \colon \Struc{\L}} \truncate{\Sigmatype{x \colon O^\M} \Pitype{f \colon A^\M(x,x)} I^\M (f)}{1}
\]
Finally, note that a well-formed context $\Gamma$ in the sense of FOLDS clearly gives rise to a well-formed context $\Gamma^\M$ in type theory, and so we will freely use the notation $\Gamma^\M$ to denote the interpretation of $\Gamma$ in some $\L$-structure $\M$.

\subsection{Note on Metatheory}

Throughout this paper we work in a set-theoretic meta-theory.
This should already be clear by our use of e.g. the symbol $\in$ in our definition of a FOLDS signature and its associated notions.
However, we aim for this set-theoretic metatheory to be ``foundation-agnostic'' in that our definitions and arguments could equally well be formalized within an extensional set theory like ZF or a suitable model of a structural set theory inside the Univalent Foundations.

One further clarification is in order, to avoid any confusion.
Since our metalanguage is set-theoretic we will often refer to expressions in type theory using set-theoretic abbreviations. 
For example, the symbol $$\underset{s \in S}{\times} R_s$$ might refer to a well-formed type in type theory (a non-dependent sum with $S$-many components) but is not itself to be understood as a well-formed type in type theory. Rather, it is a description of such a type in our set-theoretic metalanguage (hence the use of ``$\in$'' under $\times$).
Similarly, in Section \ref{syndef} below we will construct set-theoretically certain FOLDS formulas which we then use to describe certain well-formed types in type theory. But our FOLDS formulas, as constructed, are not themselves to be understood as constructed \emph{inside} type theory. In particular, our description of the relevant formulas will contain certain distinctions between variables that will be internally invisible to the type theory.

\section{A Syntactic Definition of Isomorphism}\label{syndef}

Fix a FOLDS signature $\L$.
Let $x \colon K$ for some $K \in \obL$. Then we define the set of \emph{$x$-compatible} sorts as follows:
\[
\L_x \eqdef \lbrace R \in \obL \:\vert\: R > K \text{ and }\: \forall q \in \L(R ,K) \: \forall p_1, p_2 \in \L(K, K'), p_1q = p_2q \Rightarrow x_{p_1} = x_{p_2} \rbrace
\]
If $R \in \L_x$ we say that $x$ is \emph{$R$-compatible}.
The idea is that a certain variable is not $R$-compatible if its dependent variables violate some identity that $R$ requires in order to be defined. The following example illustrates this.

\begin{exam}
In $\Lrg$ a variable $f \colon A$ with $f_d = x$ and $f_c = y$ (and where $x \neq y$) is not $I$-compatible. This is because $di=ci$ yet $f_c \neq f_d$ as just stated. However, in $\Lrg^{=_A}$ $f$ is $=_A$-compatible (since $=_A$ does not ``require'' an identification between ``source'' and ``target'').
\end{exam}

\begin{remark}
If $\partial x = \partial y$ then $\L_x = \L_y$. In other words, variables with the same boundary are compatible with exactly the same sorts.
\end{remark}

The idea of the syntactic definition of isomorphism for $K$ goes as follows. For any variables $x, y \colon  K$ we will define by mutual induction a formula 
\[
\Ind (x,y) \in \FormL (\dep (x) \cup \dep (y))
\] 
that expresses the fact that $x$ and $y$ are ``indistinguishable up to equivalence'' by any of the sorts they are jointly compatible with (i.e. for all $R \in \L_x \cap \L_y$) and where ``equivalence'' will be defined, for any $\alpha, \beta \colon A$, as a formula 
\[
A(\alpha) \simeq A (\beta) \in \FormL (\partial \alpha \cup \partial \beta)
\]
Our first goal is to define these formulas (by mutual induction).


We begin with the definition of indistinguishability, i.e. of the formula $\Ind(x,y)$. Let $x, y \colon K$ and $R \in \L_x \cap \L_y$ and $p \in \L(R,K)$. Then we have the following two  definitions expressing respectively ``$x$ and $y$ cannot be distinguished by $R$ in position $p$, up to equivalence''
and ``$x$ and $y$ cannot be distinguished by $R$ in any position, up to equivalence'':
\begin{align*}
\Ind_R^p (x,y) &\eqdef \bigwedge \lbrace \forall \Gamma_{\alpha,\beta} R(\alpha) \simeq R(\beta) \: \vert \: \alpha, \beta \colon R, \alpha_p = x, \beta_p = y, \forall p \neq q \in \L(R, K). \alpha_q=\beta_q, \\ 
&\:\:\:\:\:\:\:\:\:\:\:\:\:\:\:\:\Gamma_{\alpha, \beta} = \partial \alpha \cup \partial \beta \setminus (\dep(\alpha_p) \cup \dep(\beta_p)) \rbrace / \conteq \\
\Ind_R (x,y) &\eqdef \bigwedge_{p \in \L(R, K)} \Ind^p_R (x,y) \\
\end{align*}

\begin{remark}
The use of $\conteq$ in $\Ind_R^p$ ensures that $\Ind_R^p (x,y)$ is finite. 
Strictly speaking, this makes $\Ind_R^p (x,y)$ a ``conjunction'' of equivalence classes of formulas up to contextual equivalence. We brush over the difficulty by assuming we in each case choose a canonical representative for the equivalence class.
\end{remark}

\begin{exam}
Take $\Lrg^{=A}$. Then we have
\[
\Ind_{=_A}^s (f,g) = \lbrace \forall h (f=_Ah \simeq g=_Ah), f=_Af \simeq g=_Af, f=_Ag \simeq g=_Ag \rbrace
\]
where we have used the standard abbreviation $f=_Ag$ for the sort $=_A(f,g)$.
Intuitively, $\Ind^s_A (f,g)$ expresses the fact that ``binary relation'' $=_A$ should not distinguish between $f$ and $g$ when they are ``plugged in'' position $s$, i.e. in the ``source'' position, which we take to be the left-hand side.
And ``does not distinguish'' is in turn understood as the existence of an equivalence (to be defined below) between the sorts $f=_Ah$ and $g=_Ah$ that we obtain by plugging $f$ and $g$ into the $s$ position, for any $h$.
 Clearly this is something that we would expect of the binary relation of equality, although of course at this point this is simply uninterpreted syntax.
\end{exam}

\begin{defin}[$\Ind(x,y), x \cong y$]\label{ind}
For $x,y \colon K$ we define the following formula
\[
\Ind (x,y) \eqdef \bigwedge_{R \in \L_x \cap \L_y} \Ind_R (x,y)
\]
which we express as $x$ and $y$ are \textbf{indistinguishable}.
In the special case when $\partial x = \partial y$ we write $x \cong y$ for $\Ind (x,y)$ and say that $x$ and $y$ are \textbf{isomorphic}.
\end{defin}

Observe that we have $\FV(\Ind(x,y)) = \dep (x) \cup \dep (y)$.
$\Ind(x,y)$ captures the intended meaning that ``$x$ and $y$ are indistinguishable up to equivalence by any $x$-compatible and $y$-compatible $R$, in any position''.
Of course, for this definition to be useful we also need to define equivalence as it appears in $\Ind_R^p (x,y)$ (i.e. in the formula $\forall \Gamma_{\alpha,\beta} R(\alpha) \simeq R(\beta)$), which we do below. But even before we do that, we can draw some useful conclusions in some degenerate cases.

\begin{exam}
In $\Lrg$ let $f,g \colon A(x,y)$. Then $\L_f = \L_g = \varnothing$ which means $f \cong g \alphaequiv \top$ (as the empty conjunction). This is a strange but I think correct conclusion. For if two terms cannot be distinguished by any data in the FOLDS signature (i.e. they have no compatible sorts) then the appropriate saturation condition is to regard them as always isomorphic.
\end{exam}

\begin{exam}
A bit stranger is the situation where, say, $f \colon A(x,x)$ and $g \colon A(x,y)$, still in $\Lrg$. 
In this case, we have $\L_f \neq \varnothing$, but $\L_f \cap \L_g = \varnothing$, and so Definition \ref{ind} dictates that we should regard $f$ and $g$ as always indistinguishable.  Formally, this does not seem to be problematic but it is a little harder to make sense of since there is certainly something in the syntax that distinguishes $f$ and $g$, namely that in the case of the former we have $f_d=f_c$ but the analogous equation does not hold of $g$.
\end{exam}

\begin{exam}\label{eqexam}
In $\Lrg^{=_A}$, let $f,g \colon A(x,y)$. Then $\L_f = \L_g = \lbrace =_A \rbrace$ and we have
\begin{align*}
\Ind(f,g) &= \Ind_{=_A}^s (f,g) \wedge \Ind_{=_A}^t (f,g) \\
&= \lbrace \forall h (f=_Ah \simeq g=_Ah), f=_Af \simeq g=_Af, f=_Ag \simeq g=_Ag \rbrace \wedge \\
 &\: \: \:\:\:\:  \lbrace \forall h (h=_Af \simeq h=_Ag), f=_Af \simeq f=_Ag, g=_Af \simeq g=_Ag\rbrace
\end{align*}
\end{exam}

\begin{prop}\label{degiso}
For $K \in \obLe, l(K) = 1, x, y \colon K$ we have $\emph{\Ind} (x,y) \alphaequiv \top$.
\end{prop}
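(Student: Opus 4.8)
The plan is to reduce everything to the observation that, when $l(K)=1$, the conjunction defining $\Ind(x,y)$ is indexed by the \emph{empty} set, so that $\Ind(x,y)$ is by convention the formula $\top$ (and hence in particular $\alphaequiv \top$).

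First I would recall that, by Definition~\ref{ind}, $\Ind(x,y) \eqdef \bigwedge_{R \in \L_x \cap \L_y} \Ind_R(x,y)$, so that it suffices to show $\L_x \cap \L_y = \varnothing$; since $\L_x \cap \L_y \subseteq \L_x$, it is in fact enough to show $\L_x = \varnothing$. Next I would unwind the definition of $\L_x$ and isolate its first requirement, namely that any $R \in \L_x$ satisfies $R > K$, which by our convention means $l(R) < l(K)$. Then I would observe that the level function never takes a value below $1$: a one-line induction on its recursive definition gives $l(J) \geq 1$ for every sort $J$ (the base clause returns $1$, and the recursive clause returns a supremum of earlier levels plus $1$, hence at least $2$). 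Since $l(K)=1$ by hypothesis, there is no sort $R$ with $l(R) < l(K)$, so $\L_x = \varnothing$, whence $\L_x \cap \L_y = \varnothing$ and $\Ind(x,y)$ is the empty conjunction, i.e. $\top$.

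There is essentially no real obstacle here; the one point worth flagging explicitly in the write-up is that the second, more delicate clause in the definition of $\L_x$ — the compatibility condition $p_1 q = p_2 q \Rightarrow x_{p_1} = x_{p_2}$ on the dependent variables — plays no role at all, because the condition $R > K$ alone already excludes every candidate $R$. Making this explicit keeps the reader from looking for a subtler argument involving $\partial x$ and $\partial y$, and it also makes clear why the hypothesis $\partial x = \partial y$ needed for the notation $x \cong y$ is irrelevant at level $1$ (indeed $\Ind(x,y) = \top$ on the nose, not merely up to $\alphaequiv$).
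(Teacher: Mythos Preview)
Your argument is correct and is exactly the paper's approach: since $l(K)=1$ there is no sort $R$ with $R>K$, so $\L_x=\L_y=\varnothing$ and $\Ind(x,y)$ is the empty conjunction, i.e.\ $\top$. Your write-up simply unpacks the one-line justification the paper leaves implicit (why $\L_x=\varnothing$), and your remark about the compatibility clause being irrelevant here is a helpful addition.
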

\begin{proof}
We have $\L_x = \L_y = \varnothing$ hence $\Ind (x,y)$ is an empty conjunction and therefore the same thing as $\top$.
\end{proof}

We now turn our attention to the definition of $\simeq$.

\begin{defin}[$K(\alpha) \simeq K(\beta)$]\label{equivalence}
For $K \in \obL$ and $\alpha, \beta \colon K$ we define the following formula:
\begin{align*}
K(\alpha) \simeq K (\beta) &\eqdef \big[ \forall x \colon K(\alpha) \exists y \colon K(\beta). \Ind (x,y) \wedge (\forall y' \colon K(\beta). \Ind (x,y') \rightarrow y \cong y') \big] \wedge \\
& \:\:\:\:\:\:\:\:\: \big[ \forall x,x' \colon K(\alpha) \forall y,y' \colon K(\beta). \Ind (x,y) \wedge \Ind (x',y') \wedge y \cong y' \rightarrow x \cong x' \big] \wedge \\
& \:\:\:\:\:\:\:\:\: \big[ \forall y \colon K (\beta) \exists x \colon K(\alpha). \Ind (x,y) \big]\\
\end{align*}
When the formula holds we say that $K(\alpha)$ is \textbf{equivalent} to $K(\beta)$.
\end{defin}

To aid understanding, here is the intended meaning behind $K(\alpha) \simeq K (\beta)$: the first conjunct expresses that $\Ind (x,y)$ is a functional relation (up to $\cong$), the second line expresses that $\Ind (x,y)$ is injective (up to $\cong$), and the third line express that $\Ind (x,y)$ is surjective. Putting all this together, $K(\alpha) \simeq K (\beta)$ can be understood as saying ``$\Ind (x,y)$ is an equivalence up to $\cong$''. 

Note that we have $\FV(K(\alpha) \simeq K(\beta))=\partial \alpha \cap \partial \beta$. Indeed, since $K(\alpha) \simeq K(\beta)$ depends only on the respective boundaries of $\alpha$ and $\beta$ it is usually helpful to write $K(\partial \alpha) \simeq K(\partial \beta)$ when the variables in the boundary can be explicitly listed. For example, we can write $A(x,y)$ for the sort of ``arrows'' in $\Lrg$ dependent on $x,y \colon O$ instead of the more alienating $A(f)$ (for some $f$ that depends on $x$ and $y$). 

\begin{exam}
In $\Lrg$ let $x,y \colon O$. Then we have:
\begin{align*}
A(x,x) \simeq A(y,y) &=   \\ \big[ \forall f \colon A(x,x) &\exists g \colon A(y,y). \Ind (f,g) \wedge (\forall h \colon A(y,y). \Ind (f,h) \rightarrow g \cong h) \big] \wedge \\
  \big[ \forall f,f' \colon A(x&,x) \forall g,g' \colon A(y,y). \Ind (f,g) \wedge \Ind (f',g') \wedge g \cong g' \rightarrow f \cong f' \big] \wedge \\
\big[ \forall g \colon A(y,y) &\exists f \colon A(x,x). \Ind (f,g) \big] =\\
\\ \big[ \forall f \colon A(x,x) &\exists g \colon A(y,y). (I(f) \simeq I(g)) \wedge (\forall h \colon A(y,y). (I(f) \simeq I(h)) \\ &\:\:\:\:\:\:\:\:\:\:\:\:\:\:\:\:\:\:\:\:\:\:\:\:\:\:\:\:\:\:\:\:\:\:\:\:\:\:\:\:\rightarrow (I(g) \simeq I(h)) \big] \wedge \\
  \big[ \forall f,f' \colon A(x&,x) \forall g,g' \colon A(y,y). (I(f)\simeq I(g)) \wedge (I(f') \simeq I(g')) \wedge (I(g) \simeq  I(g'))\\ &\:\:\:\:\:\:\:\:\:\:\:\:\:\:\:\:\:\:\:\:\:\:\:\:\:\:\:\:\:\:\:\:\:\:\:\:\:\:\:\:\rightarrow (I(f) \simeq I(f')) \big] \wedge \\
\big[ \forall g \colon A(y,y) &\exists f \colon A(x,x). (I(f) \simeq I(g)) \big]\\
\end{align*}
The only sort that could distinguish two variables $f \colon A(x,x)$ and $g \colon A(y,y)$ in $\Lrg$ is $I$.
Therefore, $\Ind(f,g)$ becomes $(I(f)\simeq I(g))$ as prescribed by the definition of $\Ind$. So what we get, intuitively, is that $A(x,x)$ and $A(y,y)$ are equivalent iff there is a one-to-one correspondence  between ``terms'' $f$ of $A(x,x)$ and ``terms'' $g$ of $A(y,y)$ that give equivalent sorts when plugged into $I$, i.e. such that $I(f) \simeq I(g)$. 
Except this ``one-to-one correspondence'' is one-to-one only up to $\cong$ which in the case of $\Lrg$ amounts to the exact same property, namely $g \cong h$ iff $g$ and $h$ produce equivalent sorts when plugged into the ``type family'' $I$. 
We clearly here want to regard $I$ as a predicate (e.g. picking out which ``arrows'' are ``identities'') which would make the equivalence $I(f) \simeq I(g)$ a logical equivalence. This is what the next proposition establishes for sorts, like $I$, of level $1$.
\end{exam}

\begin{prop}\label{degequiv}
For $K \in \obLe$, $l(K)=1$ we have $K(\alpha) \simeq K(\beta) \Leftrightarrow K(\alpha) \leftrightarrow K(\beta)$
\end{prop}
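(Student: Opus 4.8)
The plan is to expand Definition~\ref{equivalence} in the special case $l(K)=1$ and show that every subformula of the form $\Ind(\cdot,\cdot)$ or $\cdot\cong\cdot$ occurring in it is $\alphaequiv\top$, so that the three conjuncts collapse respectively to $K(\alpha)\rightarrow K(\beta)$, to $\top$, and to $K(\beta)\rightarrow K(\alpha)$; conjoining these and recalling that $K(\alpha)\leftrightarrow K(\beta)$ abbreviates $(K(\alpha)\rightarrow K(\beta))\wedge(K(\beta)\rightarrow K(\alpha))$ then finishes the proof.

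First I would record the two degeneracy facts. Since $l(K)=1$, Proposition~\ref{degiso} gives $\Ind(x,y)\alphaequiv\top$ for all $x,y\colon K$. In Definition~\ref{equivalence} the only variables ever compared by $\cong$ are $y$ and $y'$, both ranging over $K(\beta)$ and hence with $\partial y=\partial y'=\partial\beta$, and $x$ and $x'$, both ranging over $K(\alpha)$ and hence with $\partial x=\partial x'=\partial\alpha$; for such variables $\cong$ is by Definition~\ref{ind} just $\Ind$, so $x\cong x'\alphaequiv\top$ and $y\cong y'\alphaequiv\top$ again by Proposition~\ref{degiso}.

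Next I would substitute $\top$ for each of these subformulas and simplify conjunct by conjunct. The first conjunct becomes $\forall x\colon K(\alpha)\,\exists y\colon K(\beta).\ \top\wedge(\forall y'\colon K(\beta).\ \top\rightarrow\top)$, which is provably equivalent to $\forall x\colon K(\alpha)\,\exists y\colon K(\beta).\ \top$; now $\exists y\colon K(\beta).\ \top$ is $\alpha$-equivalent to the formula $K(\beta)$ and contains no free occurrence of $x$, so this reduces to $K(\alpha)\rightarrow K(\beta)$. The second conjunct becomes $\forall x,x'\colon K(\alpha)\,\forall y,y'\colon K(\beta).\ \top\wedge\top\wedge\top\rightarrow\top$, which is $\top$. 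The third conjunct becomes $\forall y\colon K(\beta)\,\exists x\colon K(\alpha).\ \top$, which by the same reasoning as the first conjunct is $K(\beta)\rightarrow K(\alpha)$. Conjoining, $K(\alpha)\simeq K(\beta)\Leftrightarrow(K(\alpha)\rightarrow K(\beta))\wedge(K(\beta)\rightarrow K(\alpha))$, i.e.\ $K(\alpha)\simeq K(\beta)\Leftrightarrow K(\alpha)\leftrightarrow K(\beta)$.

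The one step that needs care — and essentially the only genuine content of the argument — is the equivalence $\forall x\colon K(\alpha).\ \psi\Leftrightarrow(K(\alpha)\rightarrow\psi)$ whenever $\psi$ is an $h$-proposition with no free occurrence of $x$. This is not an $\alpha$-equivalence but a logical equivalence in the semantics: the formula $K(\alpha)$ is (the interpretation of $\exists\alpha\colon K.\top$, i.e.)\ the propositional truncation of the very type over which the bound variable $x$ in $\forall x\colon K(\alpha)$ ranges, so since $\psi$ is a proposition the universal property of truncation collapses $\forall x\colon K(\alpha).\ \psi$ to $K(\alpha)\rightarrow\psi$. I would spell this equivalence out once and treat the remaining bookkeeping — eliminating conjunctions of $\top$'s and pushing $\forall$/$\exists$ through constant subformulas — as routine.
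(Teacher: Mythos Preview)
Your proposal is correct and follows essentially the same route as the paper's proof: both invoke Proposition~\ref{degiso} to collapse every occurrence of $\Ind$ and $\cong$ to $\top$, then simplify the three conjuncts of Definition~\ref{equivalence} to $\forall x\colon K(\alpha)\,\exists y\colon K(\beta).\,\top$ and its symmetric partner, and finally identify these with $K(\alpha)\rightarrow K(\beta)$ and $K(\beta)\rightarrow K(\alpha)$. The paper simply asserts this last identification as a logical equivalence, whereas you spell out why it holds (via the universal property of truncation in the type-theoretic semantics); this extra care is welcome but does not change the argument.
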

\begin{proof}
Note that $\L_\alpha = \L_\beta=\varnothing$ which means that for $x,x' \colon K(\alpha)$ and $y',y \colon K(\beta)$ we have 
\[
\Ind_R^p (x,y) \Leftrightarrow \Ind_R^p (x',y) \Leftrightarrow \Ind_R^p (x,y') \Leftrightarrow x \cong x' \Leftrightarrow y \cong y' \Leftrightarrow \top
\]
Therefore, by Proposition \ref{degiso}, we can see that $K(\alpha) \simeq K(\beta)$ is logically equivalent to 
\[
\forall x \colon K (\alpha)\exists y \colon K(\beta).\top \wedge \forall y \colon K(\beta) \exists x \colon K(\alpha). \top
\]
which in turn is logically equivalent to $K(\alpha) \leftrightarrow K(\beta)$.
\end{proof}


Using propositions \ref{degiso} and \ref{degequiv} we can successively obtain formulas for $x \cong y$ for $x,y \colon K$ for sorts $K$ or arbitrarily high level.
We will write $\cong_K$ for the isomorphism relation defined on a sort $K$, or $\cong_{K(\alpha)}$ whenever we want to make the specific context explicit.
We now prove the following crucial result, which is the last general result we will carry out ``formally'', i.e. assuming an arbitrary formal system for FOLDS.

\begin{prop}\label{reflprop}
Let $\L$ be a FOLDS signature. For any $K \in \obL$, $\Inde(x,y)$ is an equivalence relation. That is, it satisfies the following three properties:
\begin{equation}
\tag{$\refl_{\Inde}$} \forall x \colon K. \Ind (x,x)
\end{equation}
\begin{equation}
\tag{$\sym_{\Inde}$} \forall x,y \colon K. \Inde (x,y) \Rightarrow \Inde (y,x)
\end{equation}
\begin{equation}
\tag{$\tran_{\Inde}$} \forall x,y,z \colon K. \Inde (x,y) \wedge \Inde (y,z) \Rightarrow \Inde (y,z)
\end{equation}
In particular, for any $K \in \obL$, $x\cong y $ is an equivalence relation.
\end{prop}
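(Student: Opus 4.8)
The plan is to prove, by induction on the level $l(K)$, the conjunction of two statements ranging over all sorts of $\L$: (i) that $\Ind(-,-)$ is an equivalence relation on $K$, and (ii) that the formula $K(\alpha) \simeq K(\beta)$ is reflexive, symmetric and transitive in $\alpha,\beta$ (equivalently, in their boundaries). These have to be proved together because, unwinding Definitions \ref{ind} and \ref{equivalence}, $\Ind(x,y)$ for $x,y \colon K$ is a conjunction of formulas $R(\alpha)\simeq R(\beta)$ for sorts $R$ with $R>K$, i.e.\ $l(R)<l(K)$, whereas $K(\alpha)\simeq K(\beta)$ is built from $\Ind$ and $\cong$ at the \emph{same} sort $K$. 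There is no vicious circle: one stratifies the goals as ``$\Ind$ at level $n$'' before ``$\simeq$ at level $n$'' before ``$\Ind$ at level $n+1$'', and at stage $n$ one uses (ii) at levels ${<}\,n$ to get (i) at level $n$, and then (i) at level $n$ — together with the fact that $\cong$ is $\Ind$ restricted to a fixed boundary — to get (ii) at level $n$. The base case $l(K)=1$ is immediate: (i) holds since $\Ind(x,y)\alphaequiv\top$ by Proposition \ref{degiso}, and (ii) holds since by Proposition \ref{degequiv} $K(\alpha)\simeq K(\beta)$ is logically equivalent to $K(\alpha)\leftrightarrow K(\beta)$, a biconditional, which is trivially an equivalence relation.

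For the inductive step on (i), each of the three properties is checked conjunct by conjunct on $\Ind(x,y)=\bigwedge_{R\in\L_x\cap\L_y}\bigwedge_{p\in\L(R,K)}\Ind_R^p(x,y)$. The conjuncts of $\Ind_R^p$ are universal closures of formulas $R(\alpha)\simeq R(\beta)$ where $\alpha,\beta\colon R$ agree in every coordinate other than $p$, with $\alpha_p,\beta_p$ the variables being compared. Reflexivity: when $\alpha_p=\beta_p$ the tuples coincide, so $R(\alpha)\simeq R(\beta)$ is a reflexivity instance of $\simeq$ at level $l(R)<n$, available by the inductive hypothesis. Symmetry: interchanging $\alpha_p$ and $\beta_p$ is a bijection between the conjuncts of $\Ind_R^p(x,y)$ and those of $\Ind_R^p(y,x)$ compatible with symmetry of $\simeq$ at level $l(R)$. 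Transitivity: given $\alpha$ with $\alpha_p=x$ and $\gamma$ with $\gamma_p=z$ agreeing off $p$, interpolate the tuple $\beta$ agreeing with both off $p$ and with $\beta_p=y$; then $R(\alpha)\simeq R(\beta)$ is a conjunct of $\Ind_R^p(x,y)$ and $R(\beta)\simeq R(\gamma)$ a conjunct of $\Ind_R^p(y,z)$, so $R(\alpha)\simeq R(\gamma)$ follows from transitivity of $\simeq$ at level $l(R)$. Here one uses that $\L_x$ depends only on $\partial x$ and that the passage to $\conteq$-classes only ever identifies provably equivalent conjuncts.

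For the inductive step on (ii), with $\Ind$ and hence $\cong$ now known to be equivalence relations at level $n$, the three clauses of Definition \ref{equivalence} are handled as follows. Reflexivity $K(\alpha)\simeq K(\alpha)$: take the witnessing element in the functionality and surjectivity clauses to be the given one and apply reflexivity of $\Ind$; the uniqueness and injectivity clauses then reduce to the observation that $\Ind(x,y')$ \emph{is} $x\cong y'$ when $\partial x=\partial y'$. Symmetry: the surjectivity clause of $K(\alpha)\simeq K(\beta)$ supplies, via symmetry of $\Ind$, the existence part of the functionality clause of $K(\beta)\simeq K(\alpha)$, and its ``unique up to $\cong$'' part is recovered from symmetry and transitivity of $\Ind$; the remaining clauses are symmetric in form, or, for injectivity, a direct consequence of $\Ind$ being an equivalence relation. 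Transitivity: compose the witnessing relations — from $x\colon K(\alpha)$ produce $y\colon K(\beta)$ and then $z\colon K(\gamma)$ by the two functionality clauses and conclude $\Ind(x,z)$ by transitivity of $\Ind$; the uniqueness part and the surjectivity clause compose dually, and injectivity is again immediate. Finally, the concluding assertion that $x\cong y$ is an equivalence relation is just (i) specialized to pairs $x,y\colon K$ with $\partial x=\partial y$, noting that equality of boundaries is itself an equivalence relation so that this restriction is well-defined.

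I expect the main obstacle to be not any individual clause but the bookkeeping in the transitivity part of (i): tracking which index sets $\L_x\cap\L_y$, $\L_y\cap\L_z$, $\L_x\cap\L_z$ control which conjunctions, and checking that the interpolated ``middle'' tuple $\beta$ is a legitimately formed variable carrying exactly the dependent variables required, so that transitivity of $\simeq$ at the lower level can genuinely be applied to the right triple. Everything else is a routine unwinding of Definitions \ref{ind} and \ref{equivalence} against the inductive hypothesis.
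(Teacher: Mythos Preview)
Your proposal is correct and follows essentially the same strategy as the paper: a mutual induction on the level, establishing in alternation that $\Ind$ is an equivalence relation at level $n$ and that $\simeq$ is an equivalence relation at level $n$, with each step feeding the next. The paper organizes this as the pair of implications $(\eqr^n_{\Ind}) \Rightarrow (\eqr^n_{\simeq})$ and $(\eqr^n_{\simeq}) \Rightarrow (\eqr^{n+1}_{\Ind})$, and your conjunct-by-conjunct verification of reflexivity, symmetry and transitivity for both $\Ind_R^p$ and the three clauses of $K(\alpha)\simeq K(\beta)$ matches the paper's argument almost line for line; the bookkeeping concern you flag about the index sets $\L_x\cap\L_y$ in the transitivity step is real but is handled (or, more honestly, elided) in the paper in the same way.
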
.
\begin{proof}
We proceed by induction on the level of $K$. 

\emph{Base Step}: For any sort $K$ with $l(K)=1$ the statement follows immediately from Proposition \ref{degiso}. 


\emph{Inductive Step}: For any choices of $\alpha, \beta, \gamma$ define the following formulas:
\begin{equation}
\tag{$\refl_{\simeq}$}  K(\alpha) \simeq K (\alpha)
\end{equation}
\begin{equation}
\tag{$\sym_{\Ind}$}  K(\alpha) \simeq K(\beta) \Rightarrow K(\beta) \simeq K(\alpha)
\end{equation}
\begin{equation}
\tag{$\tran_{\Ind}$}  K(\alpha) \simeq K(\beta) \wedge K(\beta) \simeq K(\gamma) \Rightarrow K(\alpha) \simeq K(\gamma)
\end{equation}
Let us denote the conjunction of the three statements above as $(\eqr^K_{\simeq})$ and similarly let us denote by $(\eqr^K_{\Ind})$ the conjunction of the three sentences in the statement of the Proposition.
Similarly, let us write $(\eqr_{\Ind}^n)$ for the statement that for any sort $K$ with $l(K) \geq n$, $(\eqr^K_{\Ind})$ holds and $(\eqr^n_{\simeq})$ for the analogous statement for $\simeq$.
The proof of the inductive step now proceeds in two steps. First we prove that $(\eqr^n_{\Ind}) \Rightarrow (\eqr^n_{\simeq})$ and then that $(\eqr^n_{\simeq}) \Rightarrow (\eqr^{n+1}_{\Ind})$.
Put together these give $(\eqr^n_{\Ind}) \Rightarrow (\eqr^{n+1}_{\Ind})$ which is exactly what the inductive step requires us to prove.
We take them in turn.

$(\eqr^n_{\Ind}) \Rightarrow (\eqr^n_{\simeq})$: For $(\refl_{\simeq})$ we have:
\begin{align*}
(\refl_{\simeq})  &\eqdef K (\alpha) \simeq K(\alpha)  \\
				&\eqdef  \big[ \forall x \colon K(\alpha) \exists y \colon K(\alpha). \Ind (x,y) \wedge (\forall y' \colon K(\alpha). \Ind (x,y') \rightarrow y \cong y') \big] \wedge \\
& \:\:\:\:\:\:\:\:\: \big[ \forall x,x' \colon K(\alpha) \forall y,y' \colon K(\alpha). \Ind (x,y) \wedge \Ind (x',y') \wedge y \cong y' \rightarrow x \cong x' \big] \wedge \\
& \:\:\:\:\:\:\:\:\: \big[ \forall y \colon K (\alpha) \exists x \colon K(\alpha). \Ind (x,y) \big] \\
				&\alphaequiv \big[ \forall x \colon K(\alpha) \exists y \colon K(\alpha). x \cong y \wedge (\forall y' \colon K(\alpha). x \cong y' \rightarrow y \cong y') \big] \wedge \\
& \:\:\:\:\:\:\:\:\: \big[ \forall x,x' \colon K(\alpha) \forall y,y' \colon K(\alpha). x \cong y \wedge x' \cong y' \wedge y \cong y' \rightarrow x \cong x' \big] \wedge \\
& \:\:\:\:\:\:\:\:\: \big[ \forall y \colon K (\alpha) \exists x \colon K(\alpha). x \cong y \big] \\
\end{align*}
The last form is obtained because when $x,y$ are of the same sort we write $x \cong y$ instead of $\Ind (x,y)$.
The first conjunct $\forall x \colon K(\alpha) \exists y \colon K(\alpha). x \cong y \wedge (\forall y' \colon K(\alpha). x \cong y' \rightarrow y \cong y')$ follows from the reflexivity of $\cong$, which is given by the inductive hypothesis.
The second conjunct $\forall x,x' \colon K(\alpha) \forall y,y' \colon K(\alpha). x \cong y \wedge x' \cong y' \wedge y \cong y' \rightarrow x \cong x'$ follows from the symmetry and transitivity of $\cong$, which is once again given by the inductive hypothesis.
Finally, the third conjunct $\forall y \colon K (\alpha) \exists x \colon K(\alpha). x \cong y$ follows from the reflexivity of $\cong$ which is once again given by the inductive hypothesis.
For $(\sym_{\simeq})$ we have:
\begin{align*}
(\sym_{\simeq})  &\eqdef K (\alpha) \simeq K(\beta) \Rightarrow K (\beta) \simeq K(\alpha)  \\
				&\eqdef  \bigg[ \forall x \colon K(\alpha) \exists y \colon K(\beta). (\Ind (x,y) \wedge (\forall y' \colon K(\beta). \Ind (x,y') \rightarrow y \cong y')) \wedge \\
& \:\:\:\:\:\:\:\:\: \forall x,x' \colon K(\alpha) \forall y,y' \colon K(\beta). \Ind (x,y) \wedge \Ind (x',y') \wedge y \cong y' \rightarrow x \cong x' \wedge \\
& \:\:\:\:\:\:\:\:\: \forall y \colon K (\beta) \exists x \colon K(\alpha). \Ind (x,y) \bigg] \Rightarrow \\
				&\:\:\:\:\:\:\:\:\: \bigg[ \forall x \colon K(\beta) \exists y \colon K(\alpha). \Ind (x,y) \wedge (\forall y' \colon K(\alpha). \Ind (x,y') \rightarrow y \cong y') \wedge \\
& \:\:\:\:\:\:\:\:\: \forall x,x' \colon K(\beta) \forall y,y' \colon K(\alpha). \Ind (x,y) \wedge \Ind (x',y') \wedge y \cong y' \rightarrow x \cong x' \wedge \\
& \:\:\:\:\:\:\:\:\: \forall y \colon K (\alpha) \exists x \colon K(\beta). \Ind (x,y) \bigg] \\
\end{align*}
By the inductive hypothesis, we can assume that $\Ind$ is symmetric, which means that if $\Ind(x,y)$ defines an ``equivalence up to $\cong$'' then $\Ind(y,x)$ also defines such an equivalence. We can use this basic idea to deduce each of the conjuncts of the RHS from the LHS. We deduce the conjunct $\forall y \colon K (\alpha) \exists x \colon K(\beta). \Ind (x,y)$ on the RHS as an illustration. We have:
\begin{align*}
K (\alpha) \simeq K(\beta) &\Rightarrow \forall y \colon K (\beta) \exists x \colon K(\alpha). \Ind (x,y) \\
					&\Rightarrow \forall y \colon K (\beta) \exists x \colon K(\alpha). \Ind (y,x) 
					\tag{\sym} \\
					&\alphaequiv \forall x \colon K (\beta) \exists y \colon K(\alpha). \Ind (x,y)\\
\end{align*}
By an exactly analogous argument we can establish $(\tran_{\simeq})$.

$(\eqr^n_{\Ind}) \Rightarrow (\eqr^{n+1}_{\Ind})$: Let $K$ be a sort with $l(K)=n+1$. 
Firstly, note that it suffices to establish $(\eqr_{\Ind_R^p})$ for any $R>K$ and $p \colon R \rightarrow K$ since $\Ind$ is a conjunction over all such $R$ and $p$.
Thus, the properties to be shown are the following:

\begin{equation}
\tag{$\refl_{\Ind_R^p}$} \forall x \colon K. \Ind_R^p (x,x)
\end{equation}
\begin{equation}
\tag{$\sym_{\Ind_R^p}$} \forall x,y \colon K. \Ind_R^p (x,y) \Rightarrow \Ind_R^p (y,x)
\end{equation}
\begin{equation}
\tag{$\tran_{\Ind_R^p}$} \forall x,y,z \colon K. \Ind_R^p (x,y) \wedge \Ind_R^p (y,z) \Rightarrow \Ind_R^p (y,z)
\end{equation}

We take each of the three statements to be shown in turn. Firstly, for $(\refl_{\Ind_R^p})$ we have:
\begin{align*}
\Ind_R^p (x,x) &\eqdef \bigwedge \lbrace \forall \Gamma_{\alpha,\beta} R(\alpha) \simeq R(\beta) \: \vert \: \alpha, \beta \colon R, \alpha_p = x, \beta_p = x, \forall q \neq p( \alpha_q=\beta_q) \rbrace / \conteq \\
&= \:\:\:\bigwedge \lbrace \forall \Gamma_{\alpha,\alpha} R(\alpha) \simeq R(\alpha) \: \vert \: \alpha \colon R, \alpha_p = x \rbrace / \conteq \\
\end{align*}
where the equality follows since $\alpha$ and $\beta$ in the general definition of $\Ind_R^p (x,y)$ are stipulated to differ only in the variable they assign to position $p$, which is here going to be the same variable. 
We therefore get that $\Ind_R^p(x,x)$ holds iff $R(\alpha) \simeq R(\alpha)$ holds, and we know that the latter is the case from the inductive hypothesis together with the first part of the inductive step above, since $R>K$.

Secondly, for $(\sym_{\Ind_R^p})$ we have:
\begin{align*}
\Ind_R^p (x,y) &\eqdef \bigwedge \lbrace \forall \Gamma_{\alpha,\beta} R(\alpha) \simeq R(\beta) \: \vert \: \alpha, \beta \colon R, \alpha_p = x, \beta_p = y, \forall p \neq q(\alpha_q=\beta_q)\rbrace / \conteq \\
\\
&\:\Rightarrow  \bigwedge \lbrace \forall \Gamma_{\alpha,\beta} R(\beta) \simeq R(\alpha) \: \vert \: \alpha, \beta \colon R, \alpha_p = x, \beta_p = y, \forall p \neq q(\alpha_q=\beta_q) \rbrace / \conteq \tag{\sym$_{\simeq}$}\\
&\alphaequiv \bigwedge \lbrace \forall \Gamma_{\alpha,\beta} R(\beta) \simeq R(\alpha) \: \vert \: \beta, \alpha \colon R, \beta_p = x, \alpha_p = y, \forall p \neq q(\alpha_q=\beta_q) \rbrace / \conteq \\
&\eqdef \Ind_R^p (y,x)
\end{align*}

Thirdly, for $(\tran_{\Ind_R^p})$ we have:
\begin{align*}
\Ind_R^p (x,y) &\wedge \Ind_R^p (y,z) \\
&\eqdef \bigwedge \lbrace \forall \Gamma_{\alpha,\beta} R(\alpha) \simeq R(\beta) \: \vert \: \alpha, \beta \colon R, \alpha_p = x, \beta_p = y, \forall p \neq q(\alpha_q=\beta_q)\rbrace / \conteq \\
&\:\:\:\:\wedge \bigwedge \lbrace \forall \Gamma_{\alpha,\beta} R(\alpha) \simeq R(\beta) \: \vert \: \alpha, \beta \colon R, \alpha_p = y, \beta_p = z, \forall p \neq q(\alpha_q=\beta_q)\rbrace / \conteq \\
&\Leftrightarrow \bigwedge \lbrace \forall \Gamma_{\alpha,\beta} \cup \Gamma_{\beta,\gamma} R(\alpha) \simeq R(\beta) \wedge R(\beta) \simeq R(\gamma) \: \vert \: \alpha_p = x, \beta_p = y, \gamma_p=z \rbrace / \conteq \\
&\Rightarrow \bigwedge \lbrace \forall \Gamma_{\alpha,\gamma} R(\alpha) \simeq R(\gamma) \: \vert \: \alpha_p = x, \gamma_p = z)\rbrace / \conteq \\
&\eqdef \Ind_R^p (x,z)
\end{align*}
This establishes the fact that $\Ind(x,y)$ is an equivalence relation, thus completing the inductive step.
\end{proof}

We conclude this section by comparing our syntactic definitions to the usual notions.
In $\Lcat$, for $x,y \colon O$ let $\text{Iso} (x,y)$ denote the formula expressing that $x$ and $y$ are isomorphic in the traditional sense (``There exist mutually inverse $f \colon A(x,y)$ and $g \colon A(y,x)$'') 
Call this notion \emph{categorical isomorphism}. The following proposition days that in $\Tcat$ the two notions of isomorphism are logically equivalent.

\begin{prop}\label{catspec}
$\Tcat \models \text{\emph{Iso}} (x,y) \Leftrightarrow x \cong y$
\end{prop}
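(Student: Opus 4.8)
The approach is to unfold $x \cong y = \Ind(x,y)$ completely for the signature $\Lcat$ and match the result, conjunct by conjunct, against the data of a categorical isomorphism, using the axioms of $\Tcat$; this turns the proposition into a syntactic Yoneda lemma.

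First I would record that for $x,y \colon O$ --- which have empty boundary, so $\L_x \cap \L_y$ is all of $\{A,\circ,I,{=_A}\}$ --- we have $x \cong y = \Ind_A(x,y) \wedge \Ind_\circ(x,y) \wedge \Ind_I(x,y) \wedge \Ind_{=_A}(x,y)$. For the three level-$1$ sorts $\circ, I, {=_A}$, Proposition \ref{degequiv} lets me replace every $R(\alpha) \simeq R(\beta)$ occurring inside $\Ind_R^p$ by the bi-implication $R(\alpha) \leftrightarrow R(\beta)$, and then the $\Tcat$-axioms let me read these off concretely: $\circ$ becomes the graph of composition, $I$ the ``is an identity'' predicate, and ${=_A}$ honest equality of parallel arrows. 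A useful preliminary observation is that, modulo $\Tcat$, the relation $\cong_A$ restricted to any fixed hom-fibre $A(a,b)$ is just equality of arrows --- this follows from the $\Ind_{=_A}$-conjunct of $\cong_A$ together with the substitution axioms for ${=_A}$ --- and it is what makes the ``up to $\cong$'' clauses of Definition \ref{equivalence} collapse to ``up to equality''. With this in hand the recursive conjuncts $\Ind_A^d(x,y)$ and $\Ind_A^c(x,y)$ unfold, via Definition \ref{equivalence}, into the statement that indistinguishability sets up genuine bijections between the hom-fibres out of $x$ and out of $y$ (resp.\ into $x$ and into $y$), while $\Ind_\circ(x,y)$ and $\Ind_I(x,y)$ record that this family of bijections is compatible with composition and identities. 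So, modulo $\Tcat$, ``$x\cong y$'' is exactly the assertion that $x$ and $y$ represent the same functor.

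For the implication $\Iso(x,y) \Rightarrow x \cong y$ I would fix mutually inverse $\phi \colon A(x,y)$ and $\psi \colon A(y,x)$ and exhibit the witnesses conjunct by conjunct: the hom-fibre bijections are pre- and post-composition with $\phi$ and $\psi$; the $\Ind_\circ$- and $\Ind_I$-conjuncts hold because composition with a fixed arrow commutes with composition and preserves identities, which is just associativity and the unit laws of $\Tcat$; and the $\Ind_{=_A}$-conjunct is vacuous once ${=_A}$ is equality. This direction is routine once the conjuncts have been enumerated.

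For the converse $x \cong y \Rightarrow \Iso(x,y)$ --- the Yoneda-extraction step --- I would plug the identities $\id_x, \id_y$ into the ``functionality''/``surjectivity'' clauses of the hom-fibre conjuncts of $\Ind_A(x,y)$ to extract candidate arrows $\psi \colon A(y,x)$ and $\phi \colon A(x,y)$, and then use the compatibility conjuncts $\Ind_\circ(x,y)$ together with the unit laws of $\Tcat$ to identify the bijections supplied by $x \cong y$ with pre- and post-composition by $\phi,\psi$; mutual invertibility of $\phi$ and $\psi$ --- i.e.\ $\Iso(x,y)$ --- then falls out (in effect this is the Yoneda lemma for $\Tcat$). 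The step I expect to be the real obstacle is precisely this extraction: it requires a careful audit of which instances of the infinitary, $\conteq$-quotiented conjunctions $\Ind_A^d(x,y)$, $\Ind_A^c(x,y)$, $\Ind_\circ(x,y)$ are actually needed, and an argument carried out inside the FOLDS/type-theoretic syntax over $\Tcat$ that the abstract ``indistinguishability-plus-bijectivity-up-to-$\cong$'' data really pins down a single invertible arrow. The bulk of the work is in transcribing Yoneda into this syntax; the forward direction is essentially immediate once the bookkeeping is in place.
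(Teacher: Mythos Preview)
Your proposal is correct and follows essentially the same route as the paper: both arguments hinge on recognising $x \cong y$ as a syntactic Yoneda statement and both use the preliminary reduction $f \cong_A f' \Leftrightarrow f =_A f'$ on hom-fibres. The one genuine difference is in the forward direction $\Iso(x,y) \Rightarrow x \cong y$: the paper does not verify the conjuncts by hand but instead invokes the general invariance theorem of Makkai (\cite{MFOLDS}) that categorically isomorphic objects satisfy the same $\Lcat$-formulas, which dispatches the implication in one line. Your direct verification via pre- and post-composition by $\phi,\psi$ works too and is more self-contained, at the cost of the bookkeeping you anticipate. For the converse the paper packages the Yoneda extraction slightly differently --- it writes down an explicit formula $\Yso(x,y)$ expressing ``$\yoneda(-,x)$ and $\yoneda(-,y)$ are naturally isomorphic'' and observes that its conjuncts already occur inside $x \cong y$ --- but this is exactly your ``plug in the identities and read off $\phi,\psi$'' argument phrased at the level of formulas rather than terms.
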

\begin{proof}
Firstly, note that by the axioms for equality on $A$ we will get that for any $f,f' \colon A(x,y)$
\[
\Tcat \models f \cong_A f' \Leftrightarrow f=_Af'
\]
This allows us to replace (up to logical equivalence) the $=_A$ as it appears in $\text{Iso} (x,y)$ with our notion of $\cong_A$. We will assume we have done so.
The ($\Rightarrow$) direction follows from the well-known result (proven e.g. in \cite{MFOLDS}) that two categorically isomorphic objects in a category satisfy exactly the same $\Lcat$-formulas, and are therefore indistinguishable in exactly the sense of $x \cong y$.
For the ($\Leftarrow$) direction, note that from the Yoneda Lemma we know that $x$ and $y$ are categorically isomorphic if and only if their images under the Yoneda embedding are isomorphic, i.e. if we have a natural isomorphism from $\yoneda (-,x)$ to  $\yoneda (-,y)$. The existence of such a natural isomorphism can be expressed as the following $\Lcat$-formula
\begin{align*}
\Yso (x,y) \eqdef \:&\forall z,w \colon O \forall f \colon A(w,z) \forall g \colon A(z,x) \forall h \colon A(w,x) \forall k \colon A(w,y) \forall l \colon A (z,y) \\
&A(z,x) \simeq A(z,y) \wedge (\circ(g,f,h) \wedge \Ind(h,k) \leftrightarrow \Ind(g,l) \wedge \circ (l,f,k)))
\end{align*}
By the Yoneda Lemma we have $\Tcat \models \forall x \forall y \Iso (x,y) \Leftrightarrow \Yso (x,y)$. But $\Yso (x,y)$ is clearly a consequence of $x \cong y$ since both conjuncts appear in the definitions given above if, using Proposition \ref{degequiv}, we regard $\leftrightarrow$ as an instance of $\simeq$. We thus get $x \cong y \Rightarrow \Iso(x,y)$
\end{proof}

\begin{remark}\label{wit}
An important difference with our general notion $\cong$ and the usual notions of isomorphim is that $x \cong y$ is not necessarily ``witnessed'' by some other variable, e.g. an arrow $f$ as in $\text{Iso} (x,y)$. 
Thus, at this level, it is not possible to define a \emph{set} of isomorphisms $x \cong y$, but rather only a \emph{proposition} expressing whether or not $x$ and $y$ are isomorphic. To overcome this limitation, in Section \ref{secsat} we will interpret the existential quantifiers as untruncated $\Sigma$-types, which will recover in the semantics the intuitively correct $h$-level.
\end{remark}


\section{Saturation in Type Theory}\label{secsat}


We now interpret our syntactic definition of isomorphism in type theory.
However, in order to interpret $x \cong y$ as defined in Section \ref{syndef} we make one crucial change to the usual interpretation, which is that we will not truncate the existential quantifiers that appear in the definition of $\simeq$ in Definition \ref{equivalence} and interpret them instead as full $\Sigma$-types.
This is made precise in the following definition.

\begin{defin}[Interpretation of $\cong$ in type theory]\label{interpretationofequivalence}
Let $\L$ be a FOLDS signature and $\M$ an $\L$-structure. 
For $K \in \obL$ and $\alpha, \beta \colon \MK$ we define the following type (in context $(\partial \alpha \cup \partial \beta)^\M$):
\begin{align*}
K(\alpha) \simeq^\M K (\beta) &\eqdef \bigg[ \Pitype{x \colon \MK(\alpha)} \Sigmatype{\:\:y \colon \MK(\beta)} \Ind^\M (x,y) \times \bigg( \Pitype{y' \colon \MK(\beta)} \Ind^\M (x,y') \rightarrow y \cong^\M y'\bigg) \bigg] \times \\
& \:\:\:\:\:\:\:\:\: \bigg[ \Pitype{\begin{subarray} \:x,x' \colon \MK(\alpha) \\ y,y' \colon \MK(\beta)\end{subarray}} \big( \Ind^\M (x,y) \times \Ind^\M (x',y') \times y \cong^\M y' \rightarrow x \cong^\M x' \big) \bigg] \times \\
& \:\:\:\:\:\:\:\:\: \bigg[ \Pitype{y \colon K (\beta)} \Sigmatype{x \colon \MK(\alpha)} \Ind^\M(x,y) \bigg]\\
\end{align*}
By mutual induction we also define for $x,y \colon \MK$ the following type (in context $(\dep(x) \cup \dep(y))^\M$):
\begin{align*}
\Ind^\M (x,y) \eqdef \underset{\begin{subarray} \:R \in \L_x \cap \L_y \\ \:p \in \L(R, K) \end{subarray}} {\times} \lbrace \Pitype{\mathbf{z} \colon \Gamma^\M_{\alpha,\beta}} R(\alpha) \simeq^\M R(\beta) \: \vert \: \alpha_p = x, \beta_p = y, \forall q \neq p(\alpha_q=\beta_q)\rbrace / \conteq
\end{align*}
Whenever $x,y \colon \MK$ are such that $\partial x = \partial y$ we write $x \cong^\M y$ for $\Ind^\M (x,y)$.
Thus, given any FOLDS signature $\L$, any $K \in \obL$ we obtain a relation (in an appropriate context): $$\cong_{\MK} \colon \MK \times \MK \rightarrow \mathcal{U}$$
\end{defin}


\begin{notation}
Whenever the sort of $x$ and $y$ is understood we will usually denote the type $\cong_{\MK} (x,y)$ as simply $x \cong y$.
\end{notation}


Let $\M$ be an $\L$-structure and $K$ any sort in $\L$, with $\MK$ its interpretation.
Proposition \ref{reflprop} holds also in type theory (since the proof was purely formal) and so the relation $\cong_{\MK}$ will be reflexive.
In particular, this means that given any context $\Gamma$ appropriate to $K$ we have
\[
\Gamma^\M, x \colon \MK \vdash \rho_x \colon x \cong x
\]
where $\rho_x$ is some term witnessing the reflexivity.
This gives us a canonical map
\[
\Gamma^\M, x,y \colon \MK \vdash \mathtt{idtoiso}_{x,y} \colon x= y \rightarrow x \cong y
\]
by the usual induction on identity. 

\begin{defin}[Saturation]\label{saturation}
Let $\L$ be a FOLDS signature, $K \in \obL$, $\Gamma$ a context appropriate to $K$ and $\M$ an $\L$-structure. Then the \textbf{saturation for $\MK$} is the following proposition (in context $\Gamma^\M$):
\[
\SatGK \eqdef \underset{x, y \colon \MK}{\Pi} \mathtt{isequiv} (\mathtt{idtoiso}_{x,y})
\]
For an $\L$-structure $\M$, we say that $K$ is \textbf{saturated} (or that $\M$ is \textbf{$K$-saturated}) if $\SatGK$ is inhabited.
We will say that an $\L$-structure is \textbf{saturated at level $n$} (or \textbf{$n$-saturated}) if it is $K$-saturated for any $K \in \obL$ with $l(K) \leq n$.
We will also say that an $\L$-structure $\M$ is \textbf{totally saturated} if it is saturated at every level. 
We write $\SStrucL$ (resp. $\SStrucL^n$) for the type of totally saturated (resp. $n$-saturated) $\L$-structures and $\SMod{\T}{}$ (resp. $\SMod{\T}{n}$) for the type of totally saturated (resp. $n$-saturated) $\T$-models (for some $\L$-theory $\T$). 
\end{defin}

Based on Definition \ref{saturation} we regard $\Struc{\L}$ (resp. $\Mod{\T}$) as the type of ``totally unsaturated'' types of $\L$-structures (resp. models of $\T$). The following example illustrates the idea.

\begin{exam}\label{Lrgsatexample}
\begin{align*}
\Struc{\Lrg} &\eqdef \Sigmatype{\begin{subarray} \:O \colon \mathcal{U} \\ A \colon O \times O \rightarrow \mathcal{U} \end{subarray}} \: \big( \Pitype{x \colon A} A(x,x) \rightarrow \mathcal{U} \big)
\\
\SStruc{\Lrg}{1} &\simeq \Sigmatype{\begin{subarray} \:O \colon \mathcal{U} \\ A \colon O \times O \rightarrow \mathcal{U} \end{subarray}} \: \Pitype{x \colon A} \big(A(x,x) \rightarrow \textbf{Prop}_{\mathcal{U}}\big)
\end{align*}
It is important to note the difference between $\eqdef$ and $\simeq$ above. 
The RHS in the first line is the definition of $\Struc{\Lrg}$ whereas the RHS of the second line is a consequence of Proposition \ref{degiso}.
\end{exam}

The most relevant notion for practical purposes is saturation at a particular level (or total saturation). This is because $K$-saturation for an individual $K$ may be of very little use unless we know that everything that depends on $K$ is already saturated. 
For example, it is not of much use to state the saturation condition for objects in a precategory unless we already know that the hom-sets are $h$-sets.
On the contrary, if we know that $\L$ is saturated at level $n$ and $K$ is a sort of level $n$, then $K$-saturation means that isomorphism in $K$ is expressed in terms of the identity types of sorts of lower level, since these have already been declared equivalent to isomorphism.
This has the consequence that saturation at a particular level correlates very well with $h$-levels as the following proposition establishes.

\def\N{\mathcal{N}}

\begin{prop}\label{levhlev}
Let $\L$ be a FOLDS signature and $\M$ an $\L$-structure saturated at level $n$. Then for any $K \in \obL$ with $l(K)=n$, $\MK$ is of $h$-level $n$.
\end{prop}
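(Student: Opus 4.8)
The plan is to prove, by induction on the level $k \le n$, a statement strictly stronger than the assertion about $\MK$ alone. Concretely, for each $k$ with $1 \le k \le n$ we will show that for every sort $K$ with $l(K) = k$, in the appropriate context: (a) $\MK$ has $h$-level $k$; (b) for all $x, y \colon \MK$ the type $\Ind^\M(x,y)$ has $h$-level $k-1$; and (c) for all $\alpha, \beta \colon \MK$ the type $K(\alpha) \simeq^\M K(\beta)$ has $h$-level $k$. The proposition is then part (a) at $k = n$. The strengthening is unavoidable: by Definition \ref{interpretationofequivalence}, $\Ind^\M$ at a sort $K$ is a finite product of $\Pi$-types over the types $R(\alpha) \simeq^\M R(\beta)$ with $R > K$, so its $h$-level is governed by the $h$-level of $\simeq^\M$ at strictly lower levels, while $\simeq^\M$ at $K$ is in turn built from $\MK$ and $\Ind^\M$ at the same level. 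Throughout we use freely that $h$-levels are invariant under equivalence and closed under forming $\Pi$-types and products, and that a $\Sigma$-type of $n$-types over an $n$-type is an $n$-type.

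Base case $k = 1$. Part (b) is Proposition \ref{degiso}: $\Ind^\M(x,y) \simeq \top$, which is contractible, i.e.\ of $h$-level $0$. Part (a) follows because $K$-saturation (available since $l(K) = 1 \le n$) makes $\mathtt{idtoiso}_{x,y} \colon (x =_{\MK} y) \to (x \cong^\M y)$ an equivalence, so every identity type of $\MK$ is contractible and $\MK$ is an $h$-proposition. Part (c) is obtained by unfolding Definition \ref{interpretationofequivalence} with $\Ind^\M$ and $\cong^\M$ replaced by $\top$ (the semantic form of Proposition \ref{degequiv}): the first and third conjuncts reduce to $\prod_{x \colon \MK(\alpha)} \MK(\beta)$ and $\prod_{y \colon \MK(\beta)} \MK(\alpha)$ and the second to $\top$, so $K(\alpha) \simeq^\M K(\beta)$ is equivalent to the biconditional $\MK(\alpha) \leftrightarrow \MK(\beta)$, a product of $\Pi$-types into the $h$-propositions $\MK(\alpha), \MK(\beta)$ and hence an $h$-proposition.

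Inductive step. Fix $k$ with $1 < k \le n$, assume (a)--(c) at all levels below $k$, and let $l(K) = k$. We prove the three parts in the order (b), (a), (c). For (b): $\Ind^\M(x,y)$ is a finite product of $\Pi$-types whose fibers are the types $R(\alpha) \simeq^\M R(\beta)$ with $l(R) < k$; by the inductive hypothesis each has $h$-level $l(R) \le k - 1$, so (the empty product being $\top$) $\Ind^\M(x,y)$ has $h$-level $k - 1$. For (a): since $l(K) = k \le n$, $K$ is saturated, so $\mathtt{idtoiso}_{x,y}$ is an equivalence and $x =_{\MK} y$ has the same $h$-level as $x \cong^\M y = \Ind^\M(x,y)$, namely $k - 1$ by (b); hence $\MK$ has $h$-level $k$. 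For (c): unfold $K(\alpha) \simeq^\M K(\beta)$ into its three conjuncts and bound each using (a) (so $\MK$ and its identity types have $h$-levels $k$ and $k-1$) and (b) (so $\Ind^\M$ at $K$ has $h$-level $k-1$): the first and third conjuncts are $\Pi$-types over $\Sigma$-types whose base has $h$-level $k$ and whose fibers have $h$-level $k-1$, so they have $h$-level $k$; the second is a $\Pi$-type with codomain $x \cong^\M x'$, hence of $h$-level $k-1$. Their product has $h$-level $k$, giving (c) and closing the induction.

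The delicate point — and the reason for the strengthened hypothesis and the fixed order (b) $\to$ (a) $\to$ (c) — is the untruncated $\Sigma$-type in the first and third conjuncts of $\simeq^\M$, the deliberate departure from the usual interpretation flagged before Definition \ref{interpretationofequivalence}. An untruncated $\Sigma$ with $h$-level $(k-1)$ fibers over a base of $h$-level $k$ sits at $h$-level $k$ and not lower, so the estimate is tight and only goes through because the base $\MK(\beta)$ has exactly $h$-level $k$, which is part (a) at the \emph{same} level; this is why (a), and hence saturation, must be in hand before (c). Beyond this, one must check a few routine points: sorts of level $1$ occurring inside higher sorts (handled by Propositions \ref{degiso} and \ref{degequiv}), the degenerate case $\L_x \cap \L_y = \varnothing$ where $\Ind^\M \simeq \top$, and the fact that part (b) is needed for arbitrary $x, y \colon \MK$ rather than only those with $\partial x = \partial y$, since such ``mismatched'' instances of $\Ind^\M$ appear inside the conjuncts of $\simeq^\M$ — this is harmless, as the reduction proving (b) never uses $\partial x = \partial y$.
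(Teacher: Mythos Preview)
Your proof is correct and follows the same induction on level as the paper; the only difference is that you make explicit the strengthened inductive hypothesis (your parts (b) and (c)) that the paper compresses into the single clause ``by the explicit form of $R(\alpha) \simeq^\M R(\beta)$ \dots\ we also get that $R(\alpha) \simeq^\M R(\beta)$ is of $h$-level $n$.'' Your unpacking of that step---in particular the observation that the untruncated $\Sigma$ forces one to know the $h$-level of $\MK$ at the same level before bounding $\simeq^\M$---is exactly the content the paper is gesturing at, so the two arguments are essentially identical with yours being the more carefully spelled-out version.
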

\begin{proof}
We proceed by induction on the level of $K$.
For the case $l(K)=1$ note that $\M$ is $1$-saturated if and only if for any sort $K$ of level $1$, the map $$\idtoiso_{x,y} \colon x =_{\MK} y \rightarrow (x \cong y)^\M$$ is an equivalence.
But by Proposition \ref{degiso} we have $(x \cong^\M y) \alphaequiv \top$ and hence $\mathtt{idtoiso_{x,y}}$ is the unique map into \textbf{1}.
Therefore, we have
\[
\SatGK \simeq \underset{x,y \colon \MK}{\Pi} x =_{\MK} y 
\]
and the RHS is equivalent to $\mathtt{isprop}(\MK)$. Therefore, $K$ is saturated in $\M$ if and only if $\MK$ is of $h$-level 1.
Now assume that the proposition holds for $n$ and let $K$ be a sort of level $n+1$ and $\M$ an $\L$-structure saturated at level $n+1$. Saturation for $K$ asserts the equivalence between the identity type on $K$ with $x\cong_K y$. 
But $x \cong y$, as a type dependent on $\MK$, consists of the following (non-dependent) product: 
\[
\underset{R \in \L_x \cap \L_y}{\times} \: \: \underset{p \colon R \rightarrow K}{\times} (\Ind_R^p (x,y))^\M
\]
But $\Ind_R^p$ is itself a non-dependent product of $\Pi$-types of the form
\[
\underset{\alpha,\beta}{\Pi} \big[(R(\alpha) \simeq^\M R(\beta)\big]
\]
Since $\MR(\beta)$ and $\MR(\alpha)$ are of $h$-level $n$ by the inductive hypothesis, then by the explicit form of $R(\alpha) \simeq^\M R(\beta)$  (cf. Definition \ref{interpretationofequivalence}) we also get that $R(\alpha) \simeq^\M R(\beta)$ is of $h$-level $n$.
Since (dependent or non-dependent) products preserve $h$-level we get that $(x \cong y)^\M$ is of $h$-level $n$. Thus, if $\MK$ is saturated, its identity types will be of $h$-level $n$, and thus $\MK$ will itself be of $h$-level $n+1$, as required.
\end{proof}

The following lemma is also an easy consequence of our definitions. It makes precise in what way the syntactically defined notion of equivalence becomes a ``functional'' notion in the semantics, as long as everything ``above'' has been saturated.

\begin{lemma}\label{indtofun}
Let $\L$ be a FOLDS signature and $\M$ an $\L$-structure saturated at level $n$. Then for any $K \in \obL$ with $l(K) = n$ we have:
\[
\big[ K(\alpha) \simeq^\M K (\beta) \big] \simeq \Sigmatype{f \colon \MK(\alpha) \rightarrow \MK(\beta)} \isbijection(f) \times \Pitype{x \colon \MK(\alpha)} \emph{\Ind}^\M (x,f(x))
\]
\end{lemma}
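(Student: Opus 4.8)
The plan is to use the saturation hypothesis to collapse the ``up to $\cong^\M$'' clauses in Definition \ref{interpretationofequivalence} down to genuine identities, and then to recognize the resulting type, via the (untruncated) type-theoretic axiom of choice and the fundamental theorem of identity types, as the stated $\Sigma$-type of $\Ind^\M$-preserving bijections.

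First I would make two reductions. Since $l(K)=n$ and $\M$ is saturated at level $n$, in particular $\M$ is $K$-saturated, so $\idtoiso$ exhibits, for every appropriate context $\gamma$, the relation $\cong^\M$ on $\MK(\gamma)$ as equivalent to the identity type of $\MK(\gamma)$; applying this to $\gamma=\alpha$ and $\gamma=\beta$ lets me replace every occurrence of $\cong^\M$ in $K(\alpha)\simeq^\M K(\beta)$ by the corresponding identity type, obtaining an equivalent type whose three conjuncts I will call $L_a$, $L_b$, $L_c$. Second, since the proof of Proposition \ref{reflprop} was purely formal it transfers to the interpretation, and moreover the symmetry and transitivity clauses of that proof go through verbatim ``between fibres'': concretely, for $x\colon\MK(\alpha)$ and $y_1,y_2\colon\MK(\beta)$ one obtains $\Ind^\M(x,y_1)\to\Ind^\M(x,y_2)\to\Ind^\M(y_1,y_2)$, and $\Ind^\M(y_1,y_2)$ is by definition $y_1\cong^\M y_2$, hence $y_1=y_2$ by the first reduction; symmetrically on the $\alpha$-side with $x_1,x_2\colon\MK(\alpha)$. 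Call this the \emph{rigidity} observation.

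The crucial step is then: for each $x\colon\MK(\alpha)$ the type $\Sigma_{y\colon\MK(\beta)}\Ind^\M(x,y)$ is a proposition, hence contractible as soon as it is inhabited. Rigidity pins down the first component of any two elements; for the second components one must know that $\Ind^\M$ restricted to $\MK$ is itself propositional, which I would establish by a downward induction on levels: every $R$ with $R>K$ has $l(R)<n$ and is therefore saturated, so $\cong^\M$ on $\MR$ is an identity type of $\MR$ (of $h$-level $l(R)-1$), and by the inductive hypothesis $\Ind^\M$ on $\MR$ is propositional; feeding this into the explicit forms of Definition \ref{interpretationofequivalence}, and using Propositions \ref{degiso} and \ref{degequiv} at the bottom of the induction, each $R(\gamma)\simeq^\M R(\gamma')$ is seen to be a proposition (the three conjuncts are each propositional once their inner $\Sigma$'s are recognized, via rigidity one level down, as subsingletons). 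Granting this, $\Sigma_{y}\Ind^\M(x,y)$ is a proposition; whenever the first conjunct $L_a$ (or the $\Sigma$-data on the right-hand side) makes it inhabited, it is contractible, and then transport along its centre exhibits $\Ind^\M(x,-)$ as the identity type based at the centre's first component $f(x)$ --- this is the fundamental theorem of identity types (\cite{HTT}).

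Finally I would assemble the equivalence. Applying the (untruncated) type-theoretic axiom of choice to $L_a=\Pi_x\Sigma_y\bigl(\Ind^\M(x,y)\times\Pi_{y'}(\Ind^\M(x,y')\to y=y')\bigr)$ yields an equivalent $\Sigma_{f\colon\MK(\alpha)\to\MK(\beta)}\bigl(\Pi_x\Ind^\M(x,f(x))\times U(f)\bigr)$, where $U(f):=\Pi_x\Pi_{y'}(\Ind^\M(x,y')\to f(x)=y')$; hence $K(\alpha)\simeq^\M K(\beta)$ is equivalent to $\Sigma_f\bigl(\Pi_x\Ind^\M(x,f(x))\times U(f)\times L_b\times L_c\bigr)$, which already exhibits the outer $\Sigma_f$ and the $\Pi_x\Ind^\M(x,f(x))$ factor of the target. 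On the locus of $f$ where $\Pi_x\Ind^\M(x,f(x))$ is inhabited --- the only locus that contributes, and by rigidity a subsingleton locus --- the previous paragraph identifies $\Ind^\M(x,-)$ with the identity type at $f(x)$; under this identification the standard based-path-induction manipulations turn $L_b$ into the injectivity clause of $\isbijection(f)$, turn $L_c$ into its surjectivity clause, and turn $U(f)$ into (a copy of) $\Pi_x\Ind^\M(x,f(x))$ again, which recombines with the outer $\Sigma_f$ exactly as on the right-hand side. Reordering factors gives $\Sigma_f\bigl(\isbijection(f)\times\Pi_x\Ind^\M(x,f(x))\bigr)$, as required.

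The main obstacle is the propositionality/contractibility step of the third paragraph: it is what licenses replacing ``unique up to $\cong^\M$'' by genuine uniqueness and makes the round-trips of the last paragraph honest equivalences rather than mere logical equivalences, and carrying it out requires a careful simultaneous induction over the levels of $\L$ interleaving $\Ind^\M$, $\simeq^\M$ and saturation (straightforward but bookkeeping-heavy, and needing some care, e.g. with the subsingleton-locus argument, when the identity types of $\MK$ are not themselves propositions). A secondary point worth flagging is the heterogeneous form of Proposition \ref{reflprop} used in the rigidity observation: it is stated there for a single fibre, but the proof --- a manipulation of the defining conjunctions together with transitivity of $\simeq^\M$ --- applies unchanged between distinct fibres.
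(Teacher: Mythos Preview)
Your overall strategy matches the paper's --- use $n$-saturation to replace $\cong^\M$ by identity, apply the type-theoretic axiom of choice to extract a function from the first conjunct, and read off injectivity and split surjectivity from the other two --- but you are far more careful: the paper's entire proof is a two-sentence sketch asserting exactly those identifications, with no mention of propositionality of $\Ind^\M$, the fundamental theorem of identity types, or anything like your rigidity step.

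The gap is in that rigidity step, and it is primary rather than the ``secondary point'' you flag. You claim that the symmetry and transitivity of Proposition~\ref{reflprop} ``go through verbatim between fibres'', yielding $\Ind^\M(x,y_1)\to\Ind^\M(x,y_2)\to y_1\cong^\M y_2$ for $x\colon\MK(\alpha)$ and $y_i\colon\MK(\beta)$. But the proof of Proposition~\ref{reflprop} matches the conjuncts of $\Ind_R^p$ term by term, and that matching relies on all three variables sharing a boundary: the constraint $\gamma_q=\delta_q$ (for $q\neq p$ in $\L(R,K)$) forces the further dependencies of $\gamma_q$ and $\delta_q$ --- which are dictated by the relations in $\L$ together with $\gamma_p=x$, $\delta_p=y_i$ --- to be compatible, and when $\partial x\neq\partial y_i$ these can land in different fibres, so that the conjunct-index sets of $\Ind_R^p(x,y_i)$ and of $\Ind_R^p(y_1,y_2)$ are genuinely different. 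Concretely, in $\Lcat$ with $h\colon A(z,x)$, $h'\colon A(z,y)$ and $x\neq y$, each $\Ind_\circ^{t_i}(h,h')$ demands that two $A$-variables with distinct codomains coincide and is therefore vacuous, whereas $\Ind_\circ^{t_i}(h'_1,h'_2)$ for $h'_1,h'_2\colon A(z,y)$ is not. Your propositionality induction and the contractibility of $\Sigma_{y}\Ind^\M(x,y)$ --- and hence your appeal to the fundamental theorem --- all rest on this heterogeneous rigidity, so the argument as written does not close. The paper's sketch does not engage with this issue at all; it simply is not working at that level of detail.
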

\begin{proof}
By inspecting the definition of we can see that the first type in $K(\alpha) \simeq^\M K (\beta)$ is equivalent to the type of functions $\MK(\alpha) \rightarrow \MK(\beta)$, since $\cong^\M$ can simply be replaced by $=_{\MK}$ since we are assuming $\M$ is $n$-saturated.
And the last two types state exactly that this function is injective and (split) surjective, and hence bijective.
\end{proof}




\begin{remark}\label{nonelem}
Saturation as defined by Definition \ref{saturation} relies essentially on the type-theoretic semantics of FOLDS.
This means that saturated models of $\T$ are defined ``semantically'' since we have not provided a way of expressing the condition of saturation in the syntax of FOLDS. One might therefore wonder if there is a way to define saturation in an entirely syntactic manner in FOLDS, and then obtain the usual saturated notions by interpreting that definition. This can indeed be done as long as we syntactically define ``transport bijections'' that for each ``inhabitant'' of $x \cong_K y$ yield an ``equivalence'' between $A(x,\mathbf{w})$ and $A(y,\mathbf{w})$ for any $A$ dependent on $K$. We do this in full generality in \cite{TsemHMTII} for the purposes of a completeness proof, but this machinery is somewhat independent of the purposes of this paper.
\end{remark}

\section{A Higher Structure Identity Principle}\label{HSIPsec}

A Structure Identity Principle in the sense of \cite{HTT} asserts something along the lines of: ``Isomorphism between structures of a certain kind is equivalent to identity.''
In \cite{HTT} such a statement was proven for set-level structures, i.e. for first-order structures in the traditional sense.
Here we prove a generalized Structure Identity Principle where by ``structure'' we will understand saturated $\L$-structures for (finite) FOLDS signatures $\L$ of height $3$ and by ``isomorphic'' we will understand the pre-existing notion of FOLDS equivalence, suitably internalized in type theory.

Let us first define FOLDS equivalence. Fix an $\L$ and let us for the time being work ``extrenally'' i.e. in a traditional set-theoretic set-up, writing $M, N,\dots$ instead of $\M,\N,\dots$ to distinguish the two notions of $\L$-structure.
An $\L$-structure is a functor $M \colon \L \rightarrow \textbf{Set}$ and a \emph{homomorphism} of $\L$-structures is a natural transformation between the corresponding functors. 
Write $y$ for the Yoneda embedding and $\hat{y}$ for subfunctor of $y$ that misses the identity on the given object.
For $K \in \obL$ a \emph{$K$-boundary} of an $\L$-structure $M$ is a natural transformation $\delta \colon \hat{y}K\Rightarrow M$ 
and we define the \emph{fiber} of $M(K)$ over $\delta$ as the set
$
M(K)[\delta] = \lbrace a \in M(K) \colon \partial a = \delta \rbrace
$. Now let $M,N$ be $\L$-structures and let $f \colon M \rightarrow N$ be an $\L$-structure homomorphism and let $\delta$ be a $K$-boundary of $M$. This induces a $K$-boundary $f \circ \delta$ of $N$ by composition with $f$ and thereby induces a map $f_\delta$ on the fibers that takes
$a \mapsto f_K(a)$.
(The fact that this is well-defined follows from the naturality of $f$.)
%
We now say that $f$ is \emph{fiberwise surjective} if and only if for all $K$ in $\L$ and all $K$-boundaries $\delta$ the map $m_\delta \colon M(K)[\delta] \rightarrow M(K)[m \circ \delta]$ is surjective. 

\begin{defin}[\cite{MFOLDS}]
We say that $M$ and $N$ are \emph{FOLDS $\L$-equivalent} if there exists an $\L$-structure $P$ and fiberwise surjective homomorphisms $m \colon P \rightarrow M$ and $n \colon P \rightarrow N$.
\end{defin}

We now internalize this definition in type theory.
To even write it down, however, we will now restrict ourselves to signatures $\L$ of height $3$ and we will assume that $\StrucL$ is given by the $\Sigma$-type where the relations in $\L$ have been ``written in''. To be more precise, we will adopt the following notational conventions. For the rest of the paper $\L$ is a FOLDS signature of height $3$.

\begin{notation}
We denote the objects in $\L$ of level $3$ by $\mathbf{O} = O_1, \dots , O_n$, those of level $2$ by $\mathbf{A} = A_1, \dots A_m$ and those of level $1$ by $\mathbf{R} = R_1, \dots, R_k$. 
We will abbreviate a sequence of variable declarations $x_1 \colon O_1, \dots, x_n \colon O_n$ as $\mathbf{x} \colon \mathbf{O}$, leaving the indexing implicit. Similarly, we will abbreviate a product $A_1 \rightarrow B_1 \times \dots \times A_n \rightarrow B_n$ as $\mathbf{A} \rightarrow \mathbf{B}$.
In general, whenever there is any boldface character in our definitions below, this will mean that the definition is taken over the whole (non-dependent) product that the implicit indexing suggests.
For example, an expression of the form $\Pitype{\mathbf{x} \colon \mathbf{O}} T(m_{\mathbf{O}} (\mathbf{x}))$ is an abbreviation for
\[
\Pitype{x_1 \colon O_1, \dots x_n \colon O_n} \bigg[ T(m_{O_1} (x_1,\dots,x_n)) \times \dots \times T(m_{O_n} (x_1,\dots,x_n)) \bigg]
\]
\end{notation}


\begin{defin}[$\L$-homomorphism]\label{HomL}
Let $\M$ and $\N$ be $\L$-structures. The type of $\L\textbf{-homomorphisms}$ between $\M$ and $\N$ is defined as
\[
\HomL (\M, \N) \eqdef \underset{\begin{subarray} \:m_\mathbf{O} \colon \mathbf{O}^\M \rightarrow \mathbf{O}^\N \\ m_\mathbf{A} \colon \underset{\mathbf{x} \colon \mathbf{O{^\M}}}{\Pi} \mathbf{A}^\M(\mathbf{x}) \rightarrow \mathbf{A}^{\N}(\mathbf{m}_{\mathbf{O}}(\mathbf{x})) \end{subarray}}{\Sigma} \:\:\underset{\begin{subarray} \:\mathbf{x} \colon \mathbf{O}^\M \\ \mathbf{y} \colon \mathbf{A}^\M(\mathbf{y}) \end{subarray}}{\Pi} \mathbf{R}^\M (\mathbf{y}) \rightarrow \mathbf{R}^{\N}(m_\mathbf{O}(\mathbf{x}) (\mathbf{y}))
\]
As suggested by the notation above, for any $m \colon \HomL (\M, \N)$ and $K \in \obL$ we will write $m_K (\textbf{x})$ for the component of $m$ corresponding to sort $K$ in some choice of variables $\mathbf{x}$.
\end{defin}

%

\begin{defin}[Split Surjective Map]\label{rfibsurjK}
Let $f \colon A \rightarrow B$ be any map. Then $f$ is a \textbf{split surjection} if the following type is inhabited
\[
\isrfibsurjK (f) \eqdef \Pitype{y \colon B} \:\: \Sigmatype{x \colon A} f (x) = y
\]
\end{defin}

\begin{remark}
The type $\isrfibsurjK (f)$ is not a proposition, and this is necessary. We must therefore think of $\isrfibsurjK(f)$ as the type of \emph{splittings} of $f$ and there could be many distinct such splittings.
\end{remark}

\begin{defin}\label{rfibsurj}
Let $m \colon \HomL (\M,\N)$. We define
\[
\isrfibsurj (m) \eqdef \isrfibsurjK (m_{\mathbf{O}}) \times \Pitype{\mathbf{x} \colon \mathbf{O}} \isrfibsurjK (m_{\mathbf{A}} (\mathbf{x})) \times \Pitype{\begin{subarray} \:\mathbf{x} \colon \mathbf{O} \\ \mathbf{y} \colon \mathbf{A}(\mathbf{x}) \end{subarray}} \isrfibsurjK (m_{\mathbf{R}}(\mathbf{x}, \mathbf{y}))
\]
We define the type of \textbf{fiberwise surjective maps} from $\M$ to $\N$ as $$\FibSurj (\M, \N) \eqdef \underset{m \colon \HomL (\M, \N)}{\Sigma} \isrfibsurj(m)$$
We call a term $m \colon \FibSurj (\M, \N)$ a \textbf{fiberwise surjection}. 
\end{defin}

\begin{remark}
Once again, we need to make clear here that $\FibSurj (\M,\N)$ is not a $\Sigma$-type comprised of an $\L$-homomorphism that satisfies a property, since $\mathtt{fibsurj}(f)$ will not in general be a proposition. One must rather think of fiberwise surjective maps as $\L$-homomrphisms together with a \emph{choice} of a splitting that witnesses the fact that it is surjective.
\end{remark}

\begin{notation}
As usual we will abuse notation and treat $m \colon \FibSurj (\M, \N)$ as if it were comprised purely of its ``functional'' part. We will thus describe the action of $m$ on terms of $\M$ simply by appending $m$ to the given terms, suppressing subscripts. For example, we will write $mx$ for the result of applying $m_{O}$ to some $x \colon O^\M$ and similarly we will write $mf$ for the result of applying $m_A (\mathbf{x})$ to some $f \colon A^\M (\mathbf{x})$.  When it is required to make the particular splitting with which some $m \colon \FibSurj (\M, \N)$ comes equipped we will refer to it by $s$ and describe its action in the same abbreviated manner as we do with $m$.
\end{notation}

\begin{defin}[Type-Theoretic FOLDS Equivalence]\label{reedyequiv}
For any FOLDS signature $\L$ (of height $3$) and any $\M,\N \colon \StrucL$ we define 
\[
\M \simeq_{\L} \N \eqdef \Sigmatype{\mathcal{P} \colon \StrucL} \FibSurj (\mathcal{P},\M) \times \FibSurj (\mathcal{P}, \N)
\]
Exactly analogously we define $\simeq_\L$ on 
$\Mod{\T}$, $\SStrucL$ and $\SMod{T}{}$ for any $\L$-theory $\T$.
\end{defin}




Definition \ref{reedyequiv} thus gives us a relation $\simeq_\L \colon \SMod{\T}{} \times \SMod{\T}{} \rightarrow \mathcal{U}$.
All the components are now in place for us to prove our higher structure identity principle.
To do so, we first require the following lemma.

\begin{lemma}\label{fibsurjpres2}
Let $\L$ be a FOLDS signature with $\heightof (\L)=3$, $\M$ and $\N$ be saturated $\L$-structures, and $m \colon \emph{\FibSurj} (\M, \N)$ a fiberwise surjective $\L$-homomorphism. 
\begin{enumerate}
\item For any $f \colon A(\mathbf{x}), g \colon A (\mathbf{y})$ we have $\emph{\Ind}^\N (mf,mg) \leftrightarrow \emph{\Ind}^\M(f,g)$. In particular, we have $mf \cong mg \leftrightarrow f \cong g$.
\item For any $x,y \colon O$ we have $(mx \cong^\M my) \simeq (x \cong^\N y)$.
\end{enumerate}
\end{lemma}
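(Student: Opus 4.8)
The plan is to establish (1) using that in a height-$3$ signature the only sorts above an $A$-sort have level $1$, and then to obtain (2) by observing that the hypotheses already force $m$ to be an equivalence on every sort.

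\emph{Proof of (1).} Since $\heightof(\L)=3$, every $R$ with $R>A$ has $l(R)=1$, so by Proposition \ref{levhlev} each $R^\M$ and $R^\N$ is an $h$-proposition, and by the semantic counterpart of Proposition \ref{degequiv} the subformula $R(\alpha)\simeq^\M R(\beta)$ occurring inside $\Ind^\M$ is simply $R^\M(\alpha)\leftrightarrow R^\M(\beta)$. Hence $\Ind^\M(f,g)$ --- and likewise $\Ind^\N(mf,mg)$ --- is, up to logical equivalence, a finite conjunction of bi-implications between $\mathbf{R}$-predicates evaluated at completions of the pair. For $(\Rightarrow)$: $m$ carries a completion $\alpha$ of $f$ to a completion $m\alpha$ of $mf$ (naturality of $m$ makes the boundaries match and preserves the equations witnessing $R$-compatibility), the homomorphism clause of $m$ gives $R^\M(\alpha)\to R^\N(m\alpha)$, and fiberwise surjectivity of $m_R$ gives $R^\N(m\alpha)\to R^\M(\alpha)$, so each $\M$-conjunct transfers to the matching $\N$-conjunct. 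For $(\Leftarrow)$: first use fiberwise surjectivity of $m_{\mathbf{O}}$ and $m_{\mathbf{A}}$ to lift an arbitrary completion of $mf$ in $\N$ to one of $f$ in $\M$, then argue as before. The delicate point is matching the index sets $\L_f\cap\L_g$ and $\L_{mf}\cap\L_{mg}$ of compatible sorts; naturality of $m$ gives $\L_f\subseteq\L_{mf}$, and since the claim is an equivalence and the construction is symmetric in $\M$ and $\N$ this is enough. The displayed special case is $\partial f=\partial g$.

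\emph{Consequence of (1): $m$ is levelwise an equivalence.} For level-$1$ $R$, the map $m_R(\mathbf{u})\colon R^\M(\mathbf{u})\to R^\N(m\mathbf{u})$ is a map between propositions with a converse supplied by fiberwise surjectivity, hence an equivalence. For level-$2$ $A$, the map $m_A(\mathbf{u})\colon A^\M(\mathbf{u})\to A^\N(m\mathbf{u})$ is a map between $h$-sets (Proposition \ref{levhlev}), split surjective since $m$ is fiberwise surjective and injective since $m_A(\mathbf{u})(f)=m_A(\mathbf{u})(g)$ gives $mf\cong^\N mg$ by saturation of $\N$, hence $f\cong^\M g$ by (1), hence $f=g$ by saturation of $\M$; a bijection of sets is an equivalence. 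Finally $m_{\mathbf{O}}$ is split surjective since $m$ is fiberwise surjective, and it is injective: if $mx=my$, then every conjunct of $\Ind^\M(x,y)$ holds --- for each free parameter and each completion the relevant factor $A(\mathbf{u})\simeq^\M A(\mathbf{v})$ (or $R^\M(\cdots x\cdots)\leftrightarrow R^\M(\cdots y\cdots)$) has an $\N$-counterpart built from $m\mathbf{u}$ and $m\mathbf{v}$, which coincide because $mx=my$, so the $\N$-counterpart holds by reflexivity of $\cong^\N$ (the semantic form of Proposition \ref{reflprop}), or trivially in the level-$1$ case, and it transports back along the bijections $m_A$ just established, the transported bijection being $\Ind^\M$-compatible by (1) and Lemma \ref{indtofun}. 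Thus $x\cong^\M y$, so $x=y$ by saturation of $\M$, and $m_{\mathbf{O}}$ is an equivalence.

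\emph{Proof of (2).} Because $m_{\mathbf{O}}$ is an equivalence, $\mathtt{ap}_{m_{\mathbf{O}}}\colon(x=_{\mathbf{O}^\M}y)\to(mx=_{\mathbf{O}^\N}my)$ is an equivalence; precomposing with the equivalence $(x\cong^\M y)\simeq(x=_{\mathbf{O}^\M}y)$ from saturation of $\M$ and postcomposing with $(mx=_{\mathbf{O}^\N}my)\simeq(mx\cong^\N my)$ from saturation of $\N$ yields the equivalence between the isomorphism type of $x,y$ and that of $mx,my$ asserted by (2). I expect the real work to lie in (1) and in the injectivity of $m_{\mathbf{O}}$, both of which demand careful bookkeeping with FOLDS boundaries: one must track which sub-boundary data is ``shared'' versus ``free'' in $\Ind_R^{p}$, and lift completions across $m$ coherently using the splittings at the $\mathbf{O}$-, $\mathbf{A}$- and $\mathbf{R}$-levels simultaneously.
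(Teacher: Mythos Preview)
Your argument for part (1) is correct and is essentially the paper's: unfold $\Ind$ at level $2$ to a product of level-$1$ biconditionals (using Propositions \ref{degequiv} and \ref{levhlev}) and shuttle between $\M$ and $\N$ using $m$ and its splitting $s$. The worry you raise about $\L_f$ versus $\L_{mf}$ is a red herring: the index set over which $\Ind$ is formed is determined syntactically by the signature and the boundary, so $\Ind^\M(f,g)$ and $\Ind^\N(mf,mg)$ are interpretations of the same formula and have the same shape.

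For part (2) you take a genuinely different route. The paper constructs explicit maps
\[
e \colon (x \cong^\M y) \longrightarrow (mx \cong^\N my), \qquad d \colon (mx \cong^\N my) \longrightarrow (x \cong^\M y)
\]
directly, sending a family of equivalences $\eta_{\mathbf{z}} \colon A^\M(x,\mathbf{z}) \simeq^\M A^\M(y,\mathbf{z})$ to the family $\mathbf{z} \mapsto m \circ \eta_{s\mathbf{z}} \circ s$ (and conversely), with the $\Ind$-compatibility supplied by the maps $\sigma,\tau$ of part (1); it then checks $d$ and $e$ are mutually inverse. You instead first prove that $m$ is a levelwise equivalence --- in particular that $m_{\mathbf{O}}$ is one --- and then obtain (2) as the composite $(x \cong^\M y)\simeq (x=y)\simeq (mx=my)\simeq (mx \cong^\N my)$ using saturation on both sides and $\mathtt{ap}_{m_{\mathbf{O}}}$ in the middle. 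This is valid: a split surjection with a section $s$ and a map $(mx=my)\to(x=y)$ is automatically an equivalence, since applying the injectivity map to $m(s(m a))=ma$ yields $s(ma)=a$ and hence $s$ is a two-sided inverse. Your transport argument for injectivity of $m_{\mathbf{O}}$ (pull back the reflexive term of $m\mathbf{u}\cong^\N m\mathbf{v}$ along the already-established equivalences $m_{\mathbf{A}}$, with $\Ind$-compatibility from (1)) is sound.

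The trade-off is organizational. The paper keeps the lemma self-contained and then \emph{uses} part (2) in Theorem \ref{HSIP} to show $m_{\mathbf{O}}$ is an equivalence; you absorb that step into the lemma itself, so your proof of the lemma already contains the substance of the theorem, after which Theorem \ref{HSIP} becomes a one-liner. Your route also shows, slightly more sharply than the paper's arrangement, that (2) is in fact a \emph{consequence} of (1) together with saturation and the splitting --- the paper proves (1) and (2) in parallel rather than in series.
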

\begin{proof}
For (1) we have that $\Ind^\M (f,g)$ will be of the following form
\[
\underset{\mathbf{x} \colon \mathbf{O}^\M}{\Pi} \mathbf{R}^\M (\mathbf{x},\mathbf{f}) \leftrightarrow \mathbf{R}^\M (\mathbf{y}, \mathbf{g})
\]
and similarly $\Ind^\N (mf,mg)$ will be of the following form
\[
\underset{\mathbf{x} \colon \mathbf{O}^\N}{\Pi} \mathbf{R}^\N (\mathbf{x},m\mathbf{f}) \leftrightarrow \mathbf{R}^\N (\mathbf{y}, m\mathbf{g})
\]
Since each $\mathbf{R}$ in the above types will be propositions so will the whole of $\Ind^\M (f,g)$ and $\Ind^\N (f,g)$. Therefore, it suffices to prove that they are logically equivalent.
To that end define the maps
\[
\xymatrix{
(\underset{\mathbf{x} \colon \mathbf{O}^\M}{\Pi} \mathbf{R}^\M (\mathbf{x},\mathbf{f}) \leftrightarrow \mathbf{R}^\M (\mathbf{y}, \mathbf{g})) \ar[r]_{\sigma}  &(\underset{\mathbf{x} \colon \mathbf{O}^\N}{\Pi} \mathbf{R}^\N (\mathbf{x}, m\mathbf{f}) \leftrightarrow \mathbf{R}^\N (\mathbf{y}, m\mathbf{g})) \\
\eta \ar@{|->}[r]  &\lambda \mathbf{x}. (\lambda \mathbf{r}. m(p_1(\eta_{s(\mathbf{x})}))(s\mathbf{r}), \lambda \mathbf{r}. m(p_2(\eta_{s(\mathbf{x})}))(s\mathbf{r})) \\
}
\]
and
\[
\xymatrix{
(\underset{\mathbf{x} \colon \mathbf{O}^\M}{\Pi} \mathbf{R}^\M (\mathbf{x},\mathbf{f}) \leftrightarrow \mathbf{R}^\M (\mathbf{y}, \mathbf{g}))   &(\underset{\mathbf{x} \colon \mathbf{O}^\N}{\Pi} \mathbf{R}^\N (\mathbf{x}, m\mathbf{f}) \leftrightarrow \mathbf{R}^\N (\mathbf{y}, m\mathbf{g})) \ar[l]_{\tau} \\
\lambda \mathbf{x}.(\lambda \mathbf{r}. s(p_1(\epsilon_{\mathbf{x}}) (m\mathbf{r})), \lambda \mathbf{r}. s(p_2(\epsilon_{\mathbf{x}}) (m\mathbf{r})))    & \epsilon \ar@{|->}[l]
}
\]
The fact that $mf \cong mg \leftrightarrow f \cong g$ follows by as the special case when $\mathbf{x} \syneq \mathbf{y}$. 

For (2), we define maps as shown:
\[
\hspace{-1cm}\xymatrix{
x \cong y \ar@/_/[r]_{e} \ar@{=}[d]  &mx \cong my \ar@/_/[l]_{d} \ar@{=}[d] \\
\Pitype{\mathbf{z} \colon \mathbf{O}^\M} \mathbf{A}^\M (x,\mathbf{z}) \simeq \mathbf{A}^\M (y,\mathbf{z})  \ar@/_/[r]_{e} \ar@{=}[d]  &\Pitype{\mathbf{z} \colon \mathbf{O}^\N} \mathbf{A}^\N (mx,\mathbf{z}) \simeq \mathbf{A}^\N (my,\mathbf{z})  \ar@/_/[l]_{d} \ar@{=}[d]\\
\Pitype{\begin{subarray} \:\mathbf{z} \colon \mathbf{O}^\M \\ f \colon A^\M(x, \mathbf{z}) \end{subarray}} \Sigmatype{g \colon A^\M(y, \mathbf{z})} \Ind^\M (f,g) \ar@/_/[r]_{e}   &\Pitype{\begin{subarray} \:\mathbf{z} \colon \mathbf{O}^\N \\ f \colon A^\N(mx, \mathbf{z}) \end{subarray}} \Sigmatype{g \colon A^\N(my, \mathbf{z})} \Ind^\N (mf,mg) \ar@/_/[l]_{d}  \\
\eta \ar@{|->}[r]  &\lambda \mathbf{z}\lambda f. \bigg\langle m(p_1(\eta_{s\mathbf{z}, sf})),  \sigma(p_2(\eta_{s\mathbf{z}, sf}))  \bigg\rangle  \\
\lambda \mathbf{z}\lambda f. \bigg\langle s(p_1(\epsilon_{m\mathbf{z},mf})), \tau (p_2(\epsilon_{m\mathbf{z},mf}))  \bigg\rangle 			  &\epsilon \ar@{|->}[l]
}
\]
It is easy to check that $d$ and $e$ are inverses to each other, thus establishing the required equivalence.
\end{proof}



We are now ready to state and prove the main result of this paper which is a fully precise version of the guiding Pre-Theorem \ref{satpretheo} in the Introduction.

\begin{theo}[Higher Structure Identity Principle]\label{HSIP}
For any $\L$ with $\heightof (\L)=3$, any $\L$-theory $\T$ and 
for any saturated $\T$-models $\M$ and $\N$ of an $\L$-theory $\T$
we have 
\[
(\M \simeq_\L \N) \simeq (\M =_{\SMode{\T}{}} \N)
\]
\end{theo}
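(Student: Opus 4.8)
The plan is to factor the asserted equivalence through an intermediate type of ``$\L$-isomorphisms'' of structures, and to use Lemma~\ref{fibsurjpres2} (``fiberwise surjections preserve and reflect $\cong$ and $\Ind$'') together with the saturation hypotheses to compare that type with FOLDS equivalence. First I would strip away the theory $\T$ and the saturation data: since ``being a model of $\T$'' and ``being saturated at every level'' are propositions (the latter by Definition~\ref{saturation}), the first projection $\SMod{\T}{}\to\StrucL$ is an embedding, so $(\M=_{\SMode{\T}{}}\N)\simeq(\M=_{\StrucL}\N)$, and likewise $\M\simeq_\L\N$ may be computed in $\StrucL$. Thus it suffices to establish $(\M\simeq_\L\N)\simeq(\M=_{\StrucL}\N)$ for saturated $\M,\N$.

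Next I would unfold $\M=_{\StrucL}\N$. Since $\StrucL$ for $\heightof(\L)=3$ is an iterated $\Sigma$-type over the families $\mathbf{O},\mathbf{A},\mathbf{R}$, repeated application of the characterisation of identity types of $\Sigma$-types, powered by univalence and function extensionality, identifies $\M=_{\StrucL}\N$ with the type $\M\cong_{\mathrm{iso}}\N$ of \emph{$\L$-isomorphisms}: an equivalence $e_{\mathbf{O}}\colon\mathbf{O}^\M\simeq\mathbf{O}^\N$, a fiberwise equivalence $e_{\mathbf{A}}\colon\prod_{\mathbf{x}}\mathbf{A}^\M(\mathbf{x})\simeq\mathbf{A}^\N(e_{\mathbf{O}}\mathbf{x})$, and a fiberwise logical equivalence $\mathbf{R}^\M(\mathbf{y})\leftrightarrow\mathbf{R}^\N(e_{\mathbf{A}}\mathbf{y})$ (automatically an equivalence, the $\mathbf{R}$ being propositions by Propositions~\ref{degiso} and~\ref{levhlev} since $\M,\N$ are saturated). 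It then remains to build an equivalence $(\M\simeq_\L\N)\simeq(\M\cong_{\mathrm{iso}}\N)$.

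In one direction an $\L$-isomorphism $(e_{\mathbf{O}},e_{\mathbf{A}},e_{\mathbf{R}})$ yields a span by taking $\mathcal{P}\eqdef\M$, the identity homomorphism $\M\to\M$ (fiberwise surjective via the canonical splittings $y\mapsto(y,\refl)$), and the homomorphism $\M\to\N$ read off from the equivalence data, which is fiberwise surjective since each of its components is an equivalence. In the other direction, given a span $\M\xleftarrow{\,m\,}\mathcal{P}\xrightarrow{\,n\,}\N$ of fiberwise surjections, I would define $e_{\mathbf{O}}\colon x\mapsto n(s_m(x))$, where $s_m$ is the chosen splitting of $m$ on objects, with candidate inverse $y\mapsto m(s_n(y))$; the verification that these are mutually inverse runs: from $n(s_n(np))=np=n(p)$ and Lemma~\ref{fibsurjpres2}(2) for $n$ we get $s_n(np)\cong^{\mathcal{P}}p$, hence $m(s_n(np))\cong^\M m(p)$ by Lemma~\ref{fibsurjpres2}(2) for $m$, hence $m(s_n(np))=m(p)$ since $\M$ is saturated and so $\cong^\M$ coincides with $=$ on $\mathbf{O}^\M$. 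The component $e_{\mathbf{A}}$ is produced the same way one level down, using Lemma~\ref{fibsurjpres2}(1) and saturation of $\M,\N$ on the $\mathbf{A}$-sorts, and $e_{\mathbf{R}}$ is immediate since the $\mathbf{R}$ are propositions and fiberwise surjectivity of $m$ and $n$ supplies both implications. That the two constructions are mutually inverse, modulo the identifications of the previous paragraphs, is then routine bookkeeping with the splittings.

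The main obstacle is this span-to-isomorphism direction, and within it the role of the apex $\mathcal{P}$: Lemma~\ref{fibsurjpres2} is stated for fiberwise surjections \emph{between saturated structures}, so to apply it to $m\colon\FibSurj(\mathcal{P},\M)$ and $n\colon\FibSurj(\mathcal{P},\N)$ one must know $\mathcal{P}$ is saturated. This is legitimate when $\M\simeq_\L\N$ on $\SMod{\T}{}$ is read with $\mathcal{P}$ ranging over saturated structures, as in the ``analogous'' clause of Definition~\ref{reedyequiv}; otherwise one must first replace an arbitrary $\mathcal{P}\colon\StrucL$ by a saturated structure through which both legs of the span factor --- truncating the level-$1$ families and passing to the quotients by $\cong$ on the level-$2$ and level-$3$ sorts --- and check that fiberwise surjectivity survives this reduction. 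Making that reduction clean, in particular that the quotiented legs remain fiberwise surjective, is the one genuinely delicate point; everything else is an application of the $\Sigma$-type structure identity principle and of Lemma~\ref{fibsurjpres2}.
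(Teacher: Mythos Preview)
Your approach is correct but takes a longer route than the paper's. The paper does not build the span-to-isomorphism correspondence by composing splittings; instead it observes that it suffices to establish $\FibSurj(\M,\N)\simeq(\M=_{\SMode{\T}{}}\N)$ for a \emph{single} leg, because once this holds for all saturated pairs the span type $\Sigma_{\mathcal P}\FibSurj(\mathcal P,\M)\times\FibSurj(\mathcal P,\N)$ becomes $\Sigma_{\mathcal P}(\mathcal P=\M)\times(\mathcal P=\N)$ and collapses to $(\M=\N)$ by contracting the singleton $\Sigma_{\mathcal P}(\mathcal P=\M)$. That single-leg equivalence is proved by showing that any fiberwise surjection $m$ between saturated structures is already componentwise an equivalence: surjectivity is given, and injectivity at each level follows from Lemma~\ref{fibsurjpres2} together with saturation (e.g.\ $m\alpha=m\beta\Rightarrow m\alpha\cong m\beta\Rightarrow\alpha\cong\beta\Rightarrow\alpha=\beta$ on the $\mathbf{A}$-sorts, and the analogous chain via part~(2) on the $\mathbf{O}$-sorts). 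Your map $x\mapsto n(s_m(x))$ is fine, but verifying that span $\to$ iso $\to$ span returns the original datum requires identifying $\mathcal P$ with $\M$ along $m$, which forces you to show $m$ is an equivalence anyway; so the ``routine bookkeeping'' you defer in fact contains the paper's key step, and the paper's factorisation through a single $\FibSurj$ is strictly shorter.

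Your observation about $\mathcal P$ needing to be saturated is well spotted: the paper silently relies on the ``analogously'' clause of Definition~\ref{reedyequiv} so that $\mathcal P$ ranges over $\SMode{\T}{}$, and your replacement-by-saturation sketch is not needed under that reading. Your preliminary reduction---stripping the propositional model-of-$\T$ and saturation data to pass from identity in $\SMode{\T}{}$ to identity in $\StrucL$---is a clean clarification that the paper leaves implicit.
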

\begin{proof}
Let $\M$ and $\N$ be saturated models for some $\L$-theory $\T$.
Clearly, it suffices to prove that $\FibSurj (\M,\N) \simeq \M =_{\SMod{\T}{}} \N$.
We define a map $$k \colon \FibSurj (\M,\N) \rightarrow \M =_{\SMod{\T}{}} \N$$ 
Let $m \colon \FibSurj (\M,\N)$. 
By univalence, to produce a term of type $\M =_{\SMod{\T}{}} \N$ it suffices to produce component-wise equivalences for all the defining data of $\M$ and $\N$. Since $m$ is assumed fiberwise surjective we know that it provides component-wise surjections between the defining data of $\M$ and $\N$. 
Thus, it suffices to prove that $m$ is also component-wise an injection. We do so by taking the data of $\L$ level-by-level, abusing notation by ignoring the necessary transfers. 

Since $\M$ and $\N$ are saturated, by Proposition \ref{levhlev} each $\mathbf{R}^\M (\mathbf{x})$ and $\mathbf{R}^\N (\mathbf{y})$ will be of $h$-level 1. Therefore, all the
\[
m_{\mathbf{R}} (\mathbf{x}) \colon \mathbf{R}^\M (\mathbf{x}) \rightarrow \mathbf{R}^\N (\mathbf{y})
\] 
are trivially also injections, and therefore equivalences. 

Now consider $m_{\mathbf{A}}(\mathbf{x}) \colon A^\M (\mathbf{x}) \rightarrow A^{\N}(m\mathbf{x})$. By Proposition \ref{levhlev} once again we know that $m_{\mathbf{A}}(\mathbf{x})$ is a map of $h$-sets and therefore to show that it is an equivalence it suffices to show $m_{\mathbf{A}}(\mathbf{x}) (\alpha) = m_{\mathbf{A}}(\mathbf{x})  (\beta) \rightarrow \alpha = \beta$. But then note that we have the following composite map
\[
m_{\mathbf{A}}(\mathbf{x}) (\alpha) = m_{\mathbf{A}}(\mathbf{x})  (\beta) \overset{(1)}{\longrightarrow} m_{\mathbf{A}}(\mathbf{x}) (\alpha) \cong m_{\mathbf{A}}(\mathbf{x})  (\beta) \overset{(2)}{\longrightarrow} \alpha \cong \beta \overset{(3)}{\longrightarrow} \alpha = \beta
\]
where (1) is by induction on identity, (2) is by the first part of Lemma \ref{fibsurjpres2} since $m$ is assumed fiberwise surjective and (3) is because $\M$ is assumed saturated.

Finally, for $m_{\mathbf{O}} \colon \mathbf{O}^\M \rightarrow \mathbf{O}^\N$, to show that it is injective it suffices to show that there is an equivalence $(x=y) \simeq (mx = my)$ for any $x,y \colon \mathbf{O}$. Since both $\M$ and $\N$ are saturated this is equivalent to showing that $(x \cong y) \simeq (mx \cong my)$. But since $m$ is fiberwise surjective, this follows from the second part of Lemma \ref{fibsurjpres2}. 

This completes the definition of the map $k$. Now we need to show that $k$ is itself an equivalence. To see this note that by univalence any $p \colon \M =_{\SMod{\T}{}} \N$ gives rise to a fiberwise surjective homomorphism $m_p$ with components given by the equivalences that $p$ induces. This gives a map 
\[
l \colon \M =_{\SMod{\T}{}} \N \rightarrow \FibSurj (\M, \N)
\]
which can easily be seen to be a two-sided inverse for $k$. Thus, $k$ is an equivalence, and this completes the proof.
\end{proof}

In order to test the adequacy of our Structure Identity Principle we need to show that it specializes to familiar cases. In this case, we need to show that Theorem 9.4.16 of \cite{HTT} follows as a special case of our Theorem \ref{HSIP}.
The next series of Propositions and Lemmas aim exactly at demonstrating this.

\begin{prop}\label{precatsat2}
The type ${\SMode{\Tcat}{2}}$ of $2$-saturated models of the $\Lcat$-theory of categories is equivalent to the type $\textbf{\emph{PreCat}}$ of precategories.
\end{prop}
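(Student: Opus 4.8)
The plan is to construct mutually inverse maps
\[
\Phi \colon \SMod{\Tcat}{2} \longrightarrow \textbf{PreCat}
\qquad\text{and}\qquad
\Psi \colon \textbf{PreCat} \longrightarrow \SMod{\Tcat}{2},
\]
and then verify that both round trips are the identity. First I would unfold the data. A $2$-saturated model $\M$ of $\Tcat$ is a $\Lcat$-structure $\langle O^\M, A^\M, \circ^\M, I^\M, (=_A)^\M\rangle$ satisfying the axioms of $\Tcat$ and, by Definition \ref{saturation}, saturated at every sort of level $\leq 2$. Since $\circ$, $I$ and $=_A$ have level $1$ and $A$ has level $2$, Proposition \ref{levhlev} tells us that $\circ^\M$, $I^\M$ and $(=_A)^\M$ are (fibrewise) propositions while $A^\M$ is (fibrewise) a set; note that $O^\M$ is left unconstrained, in accordance with the fact that the objects of a precategory need not form a set. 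A crucial preliminary observation, which I would lift from the proof of Proposition \ref{catspec}, is that in any model of $\Tcat$ one has $f \cong_{A^\M} g \leftrightarrow (=_A)^\M(f,g)$; combined with $A$-saturation (which identifies $\cong_{A^\M}$ with the identity type $=_{A^\M}$) and propositionality, this gives $(=_A)^\M(f,g) = (f =_{A^\M} g)$, so the ``equality on $A$'' datum of a $2$-saturated model is entirely determined by $O^\M$ and $A^\M$.

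Next I would build the two maps. For $\Phi$: given such an $\M$, the $\Tcat$-axioms saying that $\circ^\M$ is a total functional relation, together with the fact that $(=_A)^\M$ coincides with $=_{A^\M}$ and the propositionality of $\circ^\M$, show that for each composable pair $(f,g)$ of arrows of $\M$ the type $\Sigma_h\,\circ^\M(h,g,f)$ is a proposition that is inhabited (one untruncates the truncated existence axiom), whence a composition operation on the hom-sets of $\M$; likewise $I^\M$ yields identities, and the associativity and unit laws of a precategory are direct translations of the corresponding $\Tcat$-axioms, so $\Phi(\M) \colon \textbf{PreCat}$. For $\Psi$: from a precategory $\mathcal{C}$ with objects $O$, hom-sets $\hom$, composition $\circ$ and identities $\mathrm{id}$, I set $O^\M :\equiv O$, $A^\M :\equiv \hom$, and $\circ^\M(h,g,f) :\equiv (h = g\circ f)$, $I^\M(f) :\equiv (f = \mathrm{id})$, $(=_A)^\M(f,g) :\equiv (f = g)$; since $\hom$ is a set these three are propositions, so $\M$ is saturated at level $1$, and $A^\M$ is a set. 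The only remaining saturation to check is $A$-saturation, i.e. that $\idtoiso_{f,g} \colon (f = g) \to (f \cong_{A^\M} g)$ is an equivalence — but both sides are propositions that are logically equivalent by the preliminary observation (applied to this $\M$, whose $(=_A)^\M$ is literally $=$), so every map between them, in particular $\idtoiso_{f,g}$, is an equivalence. The $\Tcat$-axioms for $\Psi(\mathcal{C})$ hold by a routine check from the precategory laws, so $\Psi(\mathcal{C}) \colon \SMod{\Tcat}{2}$.

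Finally I would verify that $\Phi$ and $\Psi$ are quasi-inverse, which exhibits the desired equivalence $\SMod{\Tcat}{2} \simeq \textbf{PreCat}$. For $\Phi\Psi = \mathrm{id}$: the composition and identity operations that $\Phi$ extracts from $\Psi(\mathcal{C})$ are, by uniqueness, exactly those of $\mathcal{C}$, and the laws match, so we get an identification in $\textbf{PreCat}$. For $\Psi\Phi = \mathrm{id}$: the components $O^\M$ and $A^\M$ are unchanged, while $\circ^\M(h,g,f) \leftrightarrow (h =_{A^\M} g\circ f)$ — with $g \circ f$ the composite computed in $\Phi(\M)$ — by uniqueness of composites, hence $\circ^\M$ equals its reconstruction by propositional extensionality (a consequence of univalence); likewise for $I^\M$, and for $(=_A)^\M$ by the preliminary observation. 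These component identifications then assemble, via univalence applied to the $\Sigma$-type defining $\Struc{\Lcat}$, into an identification $\Psi(\Phi(\M)) = \M$ in $\SMod{\Tcat}{2}$. The hard part will be the preliminary observation identifying $(=_A)^\M$ with the identity type of $A^\M$: it rests on the nontrivial internal fact $\Tcat \models f \cong_A f' \Leftrightarrow f =_A f'$ from the proof of Proposition \ref{catspec} and is load-bearing both for the $A$-saturation of $\Psi(\mathcal{C})$ and for the well-definedness of $\Phi$ (untruncating the existence of composites via uniqueness-up-to-$=_A$); the rest — the axiom checks and the transport bookkeeping for assembling identifications inside $\Struc{\Lcat}$ — is routine.
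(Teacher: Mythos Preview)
Your proposal is correct and follows essentially the same approach as the paper. The paper's own proof is much terser: it records the key fact $\Tcat \models f \cong g \leftrightarrow f =_A g$ (the same ``preliminary observation'' you extract from Proposition~\ref{catspec}), uses it together with Proposition~\ref{levhlev} to conclude that each $A^\C(x,y)$ is a set, and then defers the remaining construction of the equivalence with $\textbf{PreCat}$ to an external reference (\cite{TsemHMT}, Proposition~7.1, and its UniMath formalization). You have simply unfolded that deferred part into the explicit quasi-inverse $\Phi,\Psi$ construction, including the untruncation of composites via uniqueness-up-to-$=_A$ and the propositional-extensionality bookkeeping; this is exactly the content the paper outsources.
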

\begin{proof}
By the axioms for $=_A$ in the theory of categories we can obtain that
\[
\Tcat \models \forall x,y \colon O \forall f,g \colon A(x,y). f \cong g \leftrightarrow f=_A g
\]
So if $\C$ is any $2$-saturated $\Tcat$-model we will have 
\[
\Pitype{x,y \colon O^\C} \Pitype{f,g \colon A^\C (x,y)} (f=_Ag)^\C \leftrightarrow f=_{A^\C(x,y)} g
\]
And since $(f=_Ag)^\C$ will be of $h$-level 1 (by Proposition \ref{levhlev}) this means exactly that $A^\C (x,y)$ is a set for all $x,y \colon O^\C$. 
What remains to be shown is that the (interpretations of) the axioms of $\Tcat$ correspond exactly to the axioms for precategories (but in relational form).
This is straightforward and already established in \cite{TsemHMT} as Proposition 7.1 and has even been formalized in \cite{UniMath}.
\end{proof}

\begin{lemma}\label{yonlemlem}
Let $\C$ be a precategory. For every $x,y,z \colon O^\C$, $\alpha \colon A^\C (z,x) \rightarrow A^\C (z,y)$ and $h \colon A^\C (z,x)$ we have
\[
\emph{\Ind}^\C (h, \alpha(h)) \simeq \Pitype{\begin{subarray}\: w \colon O^\C \\ g \colon A^\C(w,z)  \end{subarray}} \alpha(h) \circ g = \alpha (h \circ g)
\]
\end{lemma}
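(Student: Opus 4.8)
The plan is to unfold both sides and match them, using the precategory hypothesis to trivialise everything above the level-$1$ sorts. Since $\C$ is a precategory, Proposition \ref{precatsat2} makes it $2$-saturated, so by Proposition \ref{levhlev} each hom-family $A^\C(-,-)$ is an $h$-set and each level-$1$ sort of $\Lcat$ (that is, $I$, $=_A$ and $\circ$) is interpreted as a proposition. Hence, by Proposition \ref{degequiv}, every instance of $\simeq^\C$ occurring inside $\Ind^\C$ — which only ever compares level-$1$ sorts — reduces to a plain logical biconditional; moreover $\circ^\C$ is exactly the graph of the composition operation of $\C$, which is \emph{functional} (composites are unique), and $=_A^\C$ is the identity type of the set-valued hom-families. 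In particular the right-hand side is a $\Pi$-type of equalities in an $h$-set, hence a proposition, and $\Ind^\C(h,\alpha(h))$ is a finite product of biconditionals of propositions, hence also a proposition; so it suffices to establish a logical equivalence between the two.

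Next I would unfold $\Ind^\C(h,\alpha(h))$ along Definition \ref{interpretationofequivalence}. The sorts lying above $A$ are $I$, $=_A$ and $\circ$; since $\partial h = \{z,x\}$ and $\partial(\alpha(h)) = \{z,y\}$ agree only in the object $z$, the sorts $I$ and $=_A$ make no contribution (matching the remaining arguments of those sorts would force $x$ and $y$ to coincide, so the index sets in their clauses are empty), and the whole of $\Ind^\C(h,\alpha(h))$ is carried by the positions of $\circ$. Writing out $\Ind^\C_\circ(h,\alpha(h))$ — i.e.\ for each slot of $\circ$ the conjunction over configurations in which $h$, resp.\ $\alpha(h)$, occupies that slot with the other two arguments matched — and simplifying using that $\circ^\C$ is functional, one is left precisely with the statement that, for every object $w$ and every probe arrow $g\colon A^\C(w,z)$, pre-composing $\alpha(h)$ with $g$ and applying $\alpha$ to the pre-composite $h\circ g$ yield the same arrow; that is, with the right-hand side $\Pi_{w\colon O^\C}\Pi_{g\colon A^\C(w,z)}\big(\alpha(h)\circ g = \alpha(h\circ g)\big)$.

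With the two descriptions in hand, the forward implication instantiates the biconditional supplied by $\Ind^\C_\circ$ at the configuration whose composite built from $h$ is $h\circ g$: since that $\circ^\C$-statement holds, so does the corresponding one built from $\alpha(h)$, which by uniqueness of composites is exactly $\alpha(h)\circ g = \alpha(h\circ g)$. The reverse implication is immediate: from $\alpha(h)\circ g = \alpha(h\circ g)$ for all $w,g$, each required biconditional between $\circ^\C$-statements holds because both of its sides merely assert that some arrow is \emph{the} composite in question, and the equation identifies the two candidate composites. As both sides are propositions, the two implications give the asserted equivalence.

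The step I expect to be the actual work — and the one where care is needed — is the unfolding in the second paragraph: enumerating which slots of $\circ$ yield non-vacuous configurations once $h$ and $\alpha(h)$ have mismatched codomains, keeping track of which boundary variables are universally closed (the context $\Gamma_{\alpha,\beta}$ of Definition \ref{interpretationofequivalence}) versus pinned, handling the quotient by contextual equivalence, and then checking that after imposing the precategory structure exactly the single naturality equation survives with no residue. One should also note explicitly the mild abuse in the term $\alpha(h\circ g)$, where $\alpha$ is applied at the object $w$ rather than at $z$ (equivalently, $\alpha$ is read as the component at $w$ of the natural family it belongs to in the surrounding Yoneda-style argument); this is forced by the way the lemma is used and does not affect the argument.
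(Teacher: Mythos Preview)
Your proposal is correct and follows essentially the same route as the paper: reduce to a logical equivalence of propositions using the precategory hypothesis, then unfold $\Ind^\C(h,\alpha(h))$ and match conjuncts coming from $\circ$. The paper is slightly more explicit in that it names the specific conjunct used in each direction --- for the forward implication it isolates the biconditional $\big(\alpha(h)\circ g = \alpha(h\circ g)\big)\leftrightarrow\big(h\circ g = h\circ g\big)$ with trivially inhabited right side, and for the converse it identifies the single family of non-trivial biconditionals and checks it follows from the naturality equation --- whereas you phrase the unfolding as ``exactly the single naturality equation survives''; but the content is the same, and your observation that $I$ and $=_A$ contribute nothing (and your flagging of the abuse in $\alpha(h\circ g)$) are points the paper leaves implicit.
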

\begin{proof}
Since both types are propositions (since we assume that $\C$ is a precategory, and therefore identity in any hom-set will be a proposition) it suffices to show that they are logically equivalent.

For
\[
\Ind^\C (h, \alpha(h)) \rightarrow \Pitype{\begin{subarray}\: w \colon O^\C \\ g \colon A^\C(w,z)  \end{subarray}} \alpha(h) \circ g = \alpha (h \circ g)
\]
note that by the definition of $\Ind$, $\Ind^\C (h, \alpha (h))$ will contain the following proposition among the cartesian product that defines it:
\[
\Pitype{\begin{subarray} \:w \colon O \\ g \colon A(w,z) \end{subarray}} \big( \alpha(h) \circ g = \alpha (h \circ g) \leftrightarrow h \circ g = h \circ g \big)
\]
Since the RHS is trivially inhabited, the above type logically entails
\[
\Pitype{\begin{subarray}\: w \colon O^\C \\ g \colon A^\C(w,z)  \end{subarray}} \alpha(h) \circ g = \alpha (h \circ g)
\]
as required.

In the other direction, the only non trivial biconditional in $\Ind^\C (h, \alpha(h))$ given the data will be
\[
\Pitype{\begin{subarray} \:w,w' \colon O^\C \\ g \colon A^\C(w,z) \\ k \colon A^\C(x,w') \end{subarray}} \big( g \circ h = k \leftrightarrow g \circ \alpha(h) = \alpha(k) \big)
\]
But this follows immediately from 
\[
\Pitype{\begin{subarray}\: w \colon O^\C \\ g \colon A^\C(w,z)  \end{subarray}} \alpha(h) \circ g = \alpha (h \circ g)
\]
as required.
\end{proof}

\begin{lemma}\label{yonlemlem2}
Let $\C$ be a precategory. For every $x,y \colon O^\C$ we have
\[
\Pitype{z \colon O^\C} A (z,x) \simeq^\C A (z,y)
\]
\end{lemma}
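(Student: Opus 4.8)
The plan is to produce a term of $A(z,x)\simeq^\C A(z,y)$ uniformly in $z\colon O^\C$. Since $\C$ is a precategory it is in particular a $2$-saturated $\Tcat$-model (Proposition~\ref{precatsat2}) and the sort $A$ has level $2$, so Lemma~\ref{indtofun} applies and identifies $A(z,x)\simeq^\C A(z,y)$ with
\[
\Sigmatype{\alpha\colon A^\C(z,x)\to A^\C(z,y)}\isbijection(\alpha)\times\Pitype{h\colon A^\C(z,x)}\Ind^\C(h,\alpha(h)).
\]
Thus for each $z$ it suffices to exhibit a bijection $\alpha_z\colon A^\C(z,x)\to A^\C(z,y)$ together with a proof of $\Ind^\C(h,\alpha_z(h))$ for every $h$.

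For the bijection I would use the categorical isomorphism between $x$ and $y$ that is available whenever this lemma is applied (in the proof of Proposition~\ref{unicatsat}): fix mutually inverse arrows $\phi\colon A^\C(x,y)$ and $\psi\colon A^\C(y,x)$. Let $\alpha_z$ be post-composition with $\phi$ and let its candidate inverse be post-composition with $\psi$; since $\C$ is a precategory the relation $\circ^\C$ is functional, so these are well-defined maps, and the unit and associativity laws of $\C$ show they are mutually inverse, so each $\alpha_z$ is a bijection.

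For the remaining datum I would invoke Lemma~\ref{yonlemlem}, which identifies $\Ind^\C(h,\alpha_z(h))$ with the naturality statement ``$\alpha_z(h)\circ g=\alpha_w(h\circ g)$ for all $w\colon O^\C$ and $g\colon A^\C(w,z)$''. With $\alpha$ taken to be post-composition with $\phi$ this is exactly the instance $(\phi\circ h)\circ g=\phi\circ(h\circ g)$ of associativity, hence holds; and since $\C$ is $2$-saturated the type $\Ind^\C(h,\alpha_z(h))$ is a proposition (by the computation in the proof of Proposition~\ref{levhlev}), so this logical implication is enough to inhabit it. Assembling the $\alpha_z$ over all $z$---they depend on $z$ only through the composition relation---yields the required term of $\Pitype{z\colon O^\C}A(z,x)\simeq^\C A(z,y)$.

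The step I expect to need the most care is the relational bookkeeping: in $\Lcat$ both $\circ$ and $=_A$ are \emph{relations} rather than operations, so ``post-composition with $\phi$'' and the verification of the unit and associativity laws must be phrased through the functionality of $\circ^\C$ in a precategory, and when applying Lemma~\ref{yonlemlem} one must be sure that the single naturality conjunct it isolates really is the only constraint imposed by $\Ind^\C(h,\alpha_z(h))$, the conjuncts coming from $I$, from $=_A$ and from the other positions of $\circ$ being degenerate. None of this is conceptually difficult, but it is where a careless treatment of the FOLDS-to-type-theory translation of the category axioms would go wrong.
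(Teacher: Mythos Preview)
The statement as printed is defective: read literally, it asserts that for \emph{every} pair $x,y\colon O^\C$ in \emph{every} precategory the type $\Pitype{z}A(z,x)\simeq^\C A(z,y)$ is inhabited, which is plainly false (take two objects with no arrows between them). The paper's own proof and the use of the lemma in Proposition~\ref{unicatsat} make clear that the intended statement is the \emph{equivalence}
\[
\bigg[\Pitype{z\colon O^\C} A(z,x)\simeq^\C A(z,y)\bigg]\ \simeq\ (x\cong y),
\]
i.e.\ that the single ``covariant'' conjunct already determines the full $\cong$-type. This is what the paper proves (``to prove the Lemma it suffices to show\ldots'') and exactly how it is invoked in the chain of equivalences establishing $\Iso^\C(x,y)\simeq x\cong y$.

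Your proposal attempts to inhabit the displayed type by fixing a categorical isomorphism $\phi,\psi$ between $x$ and $y$, remarking that such an isomorphism ``is available whenever this lemma is applied''. That is not a proof of the lemma: a lemma must be proved under its stated hypotheses, not under additional data supplied at the call site. More importantly, even granting yourself $\Iso^\C(x,y)$ you would only be producing one direction of the intended equivalence---and the trivial one at that, since $\Pitype{z}A(z,x)\simeq^\C A(z,y)$ is literally a conjunct of $x\cong y$. The substantive content, which your argument never touches, is the converse: from a family of bijections on incoming hom-sets (satisfying $\Ind$) one must recover the remaining conjuncts of $x\cong y$, in particular the analogous bijections on \emph{outgoing} hom-sets. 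The paper does this by reducing to the classical fact that an arrow induces bijections under postcomposition on every hom-set iff it does so under precomposition.
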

\begin{proof}
By Lemma \ref{indtofun} we know that $A (z,x) \simeq^\C A (z,y)$ is equivalent to the type of bijections from $A^\C(z,x)$ to $A^\C(z,y)$. Since each $\C$ is a precategory, $A^\C(z,x)$ and $A^\C(z,y)$ are sets, and therefore bijections are equivalences, which means that $A (z,x) \simeq^\C A (z,y)$ is the type of (semantic) equivalences $A^\C(z,x) \simeq A^\C(z,y)$. Exactly the same reasoning applies to $A (x,z) \simeq^\C A (y,z)$, $A (x,x) \simeq^\C A (x,y)$, $A (x,x) \simeq^\C A (y,y)$ and $A (x,y) \simeq^\C A (y,y)$ which are the rest of the conjuncts of $x \cong y$ if we follow Definition \ref{interpretationofequivalence}.
Thus, to prove the Lemma it suffices to show 
\[
\bigg[ \Pitype{z \colon O^\C} A^\C(z,x) \simeq A^\C(z,y) \bigg] \simeq \bigg[ \Pitype{z \colon O^\C} \big( A^\C (z,x) \simeq A^\C(z,y) \big) \times \big( A^\C(x,z) \simeq A^\C(y, z)  \big) \bigg]
\]
which follows from the elementary result in category theory that precomposition by an arrow induces a bijection on every hom-set iff postcomposition also does.
\end{proof}

\begin{prop}\label{unicatsat}
The type $\SMode{\Tcat}{}$ of totally saturated models of the $\Lcat$-theory of categories is equivalent to the type $\textbf{\emph{UniCat}}$ of univalent categories.
\end{prop}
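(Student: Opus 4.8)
The plan is to reduce total saturation, via the Yoneda lemma, to the univalence axiom for a precategory. First, a totally saturated $\Tcat$-model is in particular $2$-saturated, so by Proposition \ref{precatsat2} it is (equivalent to) a precategory $\C$. Since $\Lcat$ has height $3$ with $O$ its unique sort of level $3$, total saturation of a $2$-saturated model amounts to being $O$-saturated, which by Definition \ref{saturation} is the proposition $\Pitype{x,y \colon O^\C} \mathtt{isequiv}(\idtoiso_{x,y})$, where $\idtoiso_{x,y} \colon (x =_{O^\C} y) \to (x \cong^\C y)$ is the canonical map sending $\refl$ to the reflexivity witness $\rho_x$ of Proposition \ref{reflprop}. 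Being a proposition, this condition transports along the equivalence of Proposition \ref{precatsat2}, giving
\[
\SMode{\Tcat}{} \;\simeq\; \Sigmatype{\C \colon \textbf{PreCat}} \Pitype{x,y \colon O^\C} \mathtt{isequiv}(\idtoiso_{x,y}).
\]

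The crux is to exhibit, for every precategory $\C$ and $x,y \colon O^\C$, an equivalence $\Phi_{x,y} \colon (x \cong^\C y) \simeq \Iso_\C(x,y)$ onto the usual type of categorical isomorphisms $\Iso_\C(x,y) \eqdef \Sigmatype{f \colon A^\C(x,y)} \isiso(f)$, compatible with the canonical maps out of $x = y$, i.e. with $\Phi_{x,x}(\rho_x)$ the identity isomorphism on $x$. By Lemma \ref{yonlemlem} (which identifies the nontrivial $\Ind$-conjuncts with naturality squares) together with Lemma \ref{yonlemlem2} (which reduces $x \cong^\C y$ to its contravariant/presheaf part), the type $x \cong^\C y$ is equivalent to the type of natural isomorphisms $\yoneda(-,x) \cong \yoneda(-,y)$ between representable presheaves on $\C$. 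Fullness and faithfulness of the Yoneda embedding of the precategory $\C$ then supply an equivalence of that type with $\Iso_\C(x,y)$; composing yields $\Phi_{x,y}$. The compatibility $\Phi_{x,x}(\rho_x) = $ identity follows by tracing $\rho_x$ through these identifications: it is carried to the identity natural transformation of $\yoneda(-,x)$, hence to the identity isomorphism.

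Granting $\Phi$, write $\idtoiso^{\Iso}_{x,y} \colon (x=y) \to \Iso_\C(x,y)$ for the analogous canonical map (the one whose being an equivalence is the definition of a univalent category). Since both $\idtoiso^{\Iso}_{x,y}$ and $\Phi_{x,y} \circ \idtoiso_{x,y}$ send $\refl$ to the identity isomorphism, they are equal by induction on identity; as $\Phi_{x,y}$ is an equivalence, $\mathtt{isequiv}(\idtoiso_{x,y})$ holds iff $\mathtt{isequiv}(\idtoiso^{\Iso}_{x,y})$ does, and the latter — for all $x,y$ — is exactly the univalence axiom for $\C$ (cf. Definition 9.1.6 of \cite{HTT}). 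Hence
\[
\SMode{\Tcat}{} \;\simeq\; \Sigmatype{\C \colon \textbf{PreCat}} \big(\,\C \text{ is univalent}\,\big) \;\simeq\; \textbf{UniCat},
\]
as claimed.

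I expect the main obstacle to be the crux step. Proposition \ref{catspec} provides only a \emph{logical} equivalence $\Iso(x,y) \Leftrightarrow x \cong y$, whereas here a genuine equivalence of types compatible with $\idtoiso$ is needed; what makes this available is precisely that, in the untruncated semantics, $x \cong^\C y$ has been pinned down by Lemmas \ref{yonlemlem} and \ref{yonlemlem2} as the actual type of natural isomorphisms of representables, so that full faithfulness of Yoneda — not merely its effect on isomorphism classes — can be invoked. The one genuinely delicate point is the bookkeeping that tracks the reflexivity witness $\rho_x$ to the identity isomorphism, which is what makes the final univalence comparison go through.
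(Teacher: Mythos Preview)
Your proposal is correct and follows essentially the same route as the paper: reduce to precategories via Proposition~\ref{precatsat2}, then identify $x \cong^\C y$ with $\Iso_\C(x,y)$ by combining Lemmas~\ref{yonlemlem} and~\ref{yonlemlem2} with the Yoneda lemma. The paper simply records the chain of equivalences $\Iso^\C(x,y) \simeq (x \cong y)$ and stops there; you go further and verify that the equivalence $\Phi$ carries $\rho_x$ to the identity isomorphism, so that the two $\idtoiso$ maps agree up to $\Phi$. This extra care is not strictly needed---once one has a fiberwise equivalence $(x \cong y) \simeq \Iso_\C(x,y)$, contractibility of the total space $\Sigmatype{y}\Iso_\C(x,y)$ transfers, and the ``fundamental theorem of identity types'' then forces \emph{any} pointed map out of $x=y$ to be an equivalence---but the paper does not spell this out, and your explicit compatibility argument is a cleaner and more self-contained way to close the gap.
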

\begin{proof}
By Proposition \ref{precatsat2} it suffices to show that for any precategory $\C$ and any $x,y \colon O^\C$ we have
$
\Iso^\C (x,y) \simeq x \cong y
$.
To see this observe the following series of equivalences:
\[
\xymatrix{
\Iso^\C (x,y) \ar@{=}[d]_{\text{By Definition}}  \\ 
\Sigmatype{f \colon A^\C(x,y)} \isiso(f) \ar[d]^*[@]{\simeq}_{\text{By Yoneda Lemma}} \\
\Pitype{z \colon O^\C}\: \Sigmatype{\alpha \colon A^\C(z,x) \rightarrow A^\C(z,y)} \isbijection (\alpha) \times \Pitype{\begin{subarray} \:w \colon O^\C \\h \colon A^\C(z,x) \\ g \colon A^\C (w,z) \end{subarray}} 
\alpha(h) \circ g = \alpha (h \circ g) \ar[dd]^*[@]{\simeq}_{\text{By Lemma \ref{yonlemlem}}}\\ \\
\Pitype{z \colon O^\C}\: \Sigmatype{\alpha \colon A^\C(z,x) \rightarrow A^\C(z,y)} \isbijection (\alpha) \times \Pitype{h \colon A^\C(z,x)} \Ind^\C(h, \alpha(h)) \ar[d]^*[@]{\simeq}_{\text{By Lemma \ref{indtofun}}} \\
\Pitype{z \colon O^\C} \bigg[ \big[ \Pitype{f \colon A (z,x)} \Sigmatype{g \colon A(z,y)} \Ind (f,g) \times \Pitype{h \colon A(z,y)} \Ind(f,h) \rightarrow h=g \big] \ar@{{}{ }{}}[d]^{\times} \\
\big[ \Pitype{\begin{subarray} \:f,f' \colon A^\C(z,x) \\ g,g' \colon A^\C(z,y) \end{subarray}} \Ind^\C (f,g) \times \Ind^\C (f',g') \times g = g' \rightarrow f = f') \big] \ar@{{}{ }{}}[d]^{\times} \\
\big[ \Pitype{g \colon A^\C(z,x)} \Sigmatype{f \colon A^\C(z,y)} \Ind^\C(f,g) \big] \bigg] \ar@{=}[d]\\ 
\Pitype{z \colon O^\C} A(z,x) \simeq^\C A(z,y) \ar[d]^*[@]{\simeq}_{\text{By Lemma \ref{yonlemlem2}}} \\
x \cong y
}
\]
\end{proof}


\begin{cor}[\cite{HTT}, Theorem 9.4.16]
For any univalent categories $\C$ and $\D$, the type of categorical equivalences $\C \simeq_{\text{\emph{cat}}} \D$ is equivalent to $\C =_{\emph{\UniCat}} \D$.
\end{cor}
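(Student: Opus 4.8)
The plan is to read the corollary off Theorem \ref{HSIP} by instantiating it at the signature $\L = \Lcat$ and the theory $\T = \Tcat$, and then translating the two abstract notions appearing in that theorem — type-theoretic FOLDS equivalence $\simeq_{\Lcat}$ and identity in $\SMod{\Tcat}{}$ — into their classical categorical counterparts using Propositions \ref{precatsat2} and \ref{unicatsat} together with Makkai's comparison theorem. First I would check that $\heightof(\Lcat) = 3$: the sort $O$ has level $3$ in the displayed diagram for $\Lcat$ and no sort has higher level, so $\Lcat$ is a height-$3$ signature and Theorem \ref{HSIP} applies with $\T = \Tcat$, yielding for any two saturated $\Tcat$-models $\M, \N$ an equivalence $(\M \simeq_{\Lcat} \N) \simeq (\M =_{\SMod{\Tcat}{}} \N)$.

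Next I would transport along Proposition \ref{unicatsat}, which supplies an equivalence of types $e \colon \SMod{\Tcat}{} \simeq \UniCat$. Since any equivalence of types induces an equivalence on identity types, $(\M =_{\SMod{\Tcat}{}} \N) \simeq (e(\M) =_{\UniCat} e(\N))$, and since $e$ is an equivalence every univalent category is $e(\M)$ for an essentially unique saturated $\Tcat$-model $\M$. Writing $\C, \D$ for univalent categories and $\M, \N$ for the corresponding saturated models, this step gives $(\C \simeq_{\Lcat} \D) \simeq (\C =_{\UniCat} \D)$, with $\simeq_{\Lcat}$ on the left computed on $\M, \N$.

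The substantive remaining step is to identify $\M \simeq_{\Lcat} \N$ with the type $\C \simeq_{\text{\emph{cat}}} \D$ of categorical equivalences. This is the internalization of Makkai's theorem \cite{MFOLDS} that FOLDS $\Lcat$-equivalence of categories coincides with ordinary equivalence of categories; its crux is that a fiberwise surjective $\Lcat$-homomorphism between models of $\Tcat$ is exactly a full, faithful and essentially surjective functor (fullness and faithfulness coming from surjectivity on the sorts $A$ and $=_A$ respectively, using the $\Tcat$-axioms for $=_A$, and essential surjectivity from surjectivity on $O$). Concretely: from a categorical equivalence $F \colon \C \to \D$ one builds a span $\C \xleftarrow{m} \mathcal{P} \xrightarrow{n} \D$ of fiberwise surjective $\Lcat$-homomorphisms via the iso-comma category $\mathcal{P}$ of triples $(c,d,\phi)$ with $\phi \colon F(c) \cong d$, whose two projections are themselves categorical equivalences and hence fiberwise surjective; conversely, given such a span between $\Tcat$-models, one composes $n$ with a quasi-inverse of $m$ (available precisely because $m$ is full, faithful and essentially surjective, with the required choices furnished by the splitting data of $m$) to obtain a categorical equivalence $\M \to \N$. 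One then verifies these two passages are mutually quasi-inverse, and concludes by composing $(\C \simeq_{\text{\emph{cat}}} \D) \simeq (\M \simeq_{\Lcat} \N) \simeq (\M =_{\SMod{\Tcat}{}} \N) \simeq (\C =_{\UniCat} \D)$.

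I expect this last step to be the main obstacle. It recapitulates Makkai's set-theoretic comparison in the homotopical setting, and care is needed because the fiberwise surjections of Definition \ref{rfibsurj} carry genuinely non-propositional splitting data: the round-trip equivalences must be tracked at the level of that data, not merely up to logical equivalence — in contrast with the comparison of \emph{objects} handled by Proposition \ref{catspec}, where logical equivalence sufficed. One must in particular check that the type of fiberwise surjective spans lying over a fixed equivalence is appropriately contractible, so that the span $\mathcal{P}$ and the iso-comma category of $n\circ m^{-1}$ are identified compatibly with their legs.
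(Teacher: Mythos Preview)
Your proposal follows exactly the paper's route: instantiate Theorem~\ref{HSIP} at $\L=\Lcat$, $\T=\Tcat$, transport along Proposition~\ref{unicatsat}, and then identify $\simeq_{\Lcat}$ with categorical equivalence. The only difference is one of emphasis in the last step.

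The paper disposes of that identification in a single sentence (``immediate by inspection''), whereas you treat it as the substantive obstacle and outline a full Makkai-style comparison via iso-comma spans. Your caution about the non-propositional splitting data in $\FibSurj$ is well-placed---the paper is arguably glib here---but your plan is more elaborate than necessary. Note that the proof of Theorem~\ref{HSIP} already establishes $\FibSurj(\M,\N)\simeq(\M=\N)$ directly, and the reduction of the span $\Sigma_{\mathcal P}\FibSurj(\mathcal P,\M)\times\FibSurj(\mathcal P,\N)$ to a single leg is the standard contraction of a based path space. So you need only compare $\FibSurj(\M,\N)$ (not the span) with $\C\simeq_{\text{cat}}\D$, and for univalent categories a fiberwise surjective $\Lcat$-homomorphism is already a levelwise equivalence (this is what the proof of Theorem~\ref{HSIP} shows), hence an isomorphism of categories, which for univalent categories coincides with equivalence. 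Your iso-comma construction would work, but the shortcut through the internals of the HSIP proof is what the paper's ``by inspection'' is gesturing at.
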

\begin{proof}
By Theorem \ref{HSIP} what remains now is to show that in $\textbf{UniCat}$, the relation $\simeq_{\Lcat}$ is equivalent to categorical equivalence as defined in \cite{HTT}. But this is immediate by inspection.
\end{proof}

\begin{remark}
Propositions \ref{precatsat2} and \ref{unicatsat} in our opinion provide the correct taxonomy of category theory in the Univalent Foundations. In particular, we should regard precategories not as ``unsaturated'' categories but rather as the $2$-saturated $\Tcat$-models. 
This leaves two more ``category theories'' that currently have no name: the ``totally unsaturated'' models $\Mod{\Tcat}$ and the $1$-saturated models $\SMod{\Tcat}{1}$.
There are thus four different category theories in the Univalent Foundations with respect to saturation as we have defined it. They can be thought of as follows, with the obvious inclusions:
\[
\textbf{UniCat} \hookrightarrow \textbf{PreCat} \hookrightarrow \SMod{\Tcat}{1} \hookrightarrow \Mod{\Tcat}
\]
We believe this is the right point of view to take with respect to category theory (and other theories) in the Univalent Foundations.
\end{remark}

\begin{remark}
As the series of inclusions above suggests, clearly one can now ask to what extent we can have ``Rezk completion''-type operations that reverse (in appropriate ways) the above inclusions. This is clearly an avenue worth pursuing further. In particular, the following question seems to us of immediate interest: Given an $\L$-theory $\T$, for which $n \geq \heightof(\L)$ can we construct an adjoint to the inclusion 
$
\SMod{\Tcat}{n} \hookrightarrow \SMod{\Tcat}{n-1}
$? Answers to this question seem to us unlikely to be pursued at least initially for any $\L$ with $\heightof(\L) > 3$.
\end{remark}

Finally, let us mention that in principle there is no obstacle to extending the proof of Theorem \ref{HSIP} to signatures $\L$ of arbitrarily large finite height. There are however two main difficulties. Firstly, finding a sufficiently practical way of packaging the syntax of such an $\L$ so as to define the relevant notions of $\L$-homomorphism, $\L$-equivalence etc.
Secondly, proving the higher analogues of Lemma \ref{fibsurjpres2} which would require establishing equivalences between $n$-types of ever larger $n$.
Solving these difficulties seems not entirely out of reach. 
Proving a version of Theorem \ref{HSIP} for all $n > 1$ therefore seems to us an interesting open problem.

\subsection*{Acknowledgments} I thank Peter LeFanu Lumsdaine for the discussion that initiated this paper. This work was partially supported by NSF DMS-1554092 (P.I. Harry Crane).

\bibliographystyle{shortalphabetic}

\bibliography{foldssatrefs}

\end{document}